\title{The Rational Chebyshev of Second Kind Collocation Method for Solving a Class of Astrophysics Problems}
\author {K. Parand$^{\mbox{\footnotesize
a}}$\thanks{E-mail address: \texttt{k\_parand@sbu.ac.ir},
Corresponding author, (K. Parand)}\hspace{1mm}
, \normalsize{S. Khaleqi$^{\mbox{\footnotesize
a}}$\thanks{E-mail address: \texttt{sajjad@gmail.com},
(S. Khaleghi)}\hspace{1mm}}
\vspace{-2mm}\\\\
\footnotesize{\em $^{\mbox{\footnotesize a}}$Department$\!$ of$\!$
Mathematics, Faculty$\!$ of$\!$ Mathematical Sciences$\!$,}\vspace{-2mm}\\
\footnotesize{\em Shahid Beheshti University, Evin,Tehran 19839,Iran}\\
}
\begin{document}
\newtheorem{theorem}{Theorem}
\maketitle
\begin{abstract}
The Lane-Emden equation has been used to model several phenomenas in theoretical physics, mathematical physics and astrophysics such as the theory of stellar structure. This study is an attempt to utilize the collocation method with the Rational Chebyshev of Second Kind function (RSC) to solve the Lane-Emden equation over the semi-infinit interval $[0,+\infty)$. According to well-known results and comparing with previous methods, it can be said that this method is efficient and applicable.
\end{abstract}
\begin{keywords}
Lane-Emden,Collocation method,Rational Chebyshev of Second Kind,Nonlinear ODE,Astrophysics
\end{keywords}
\newpage
\section{introduction}
Solving science and engineering problems appeared in unbounded domains, scientists have studied about it in the last decade.
Spectral method is one of the famous solution. Some researchers have proposed several spectral method to solve these kinds of problems. Using spectral method related to orthogonal systems in unbounded domains such as the Hermite spectral method and the Laguerre Method is called direct approximation ~\cite{could 90,funaro 91,funaro 90,gua 99,gua 2000a,mady 85,shen 2000,siyyam 2001}.\\
In an indirect approximation, the original problem mapping in an unbounded domain is changed to a problem in a bounded domain. Gua has used this method by choosing a suitable Jacobi polynomials to approximate the results ~\cite{gua 98,gua 2000b,gua 2000c}. There is Another method called domain truncation. In this method, the [0,$+\infty$) to [0,L] and ($-\infty$,$+\infty$) to [-L,+L] are replaced by choosing a sufficient large L ~\cite{boyd 2000}.
Using spectral method based on the rational approximation is one of the direct approximation has been used in this paper.
 Authors of~\cite{christov 1982,boyd 1987a,boyd 1987b} have developed some spectral methods on unbounded intervals by using some mutually orthogonal systems of rational functions. Boyd ~\cite{boyd 1987b} has proposed a new spectral basis named Rational Chebyshev functions on the semi-infinite interval by mapping to the Chebyshev polynomials. Author of ~\cite{gushen 2000} has suggested and discussed a set of Legendre rational functions which are mutually orthogonal in $L^{2}(0, +\infty)$ with a non-uniform weight function $\omega(x) = (x + 1)^{-2}$. Authors of~\cite{paraz 2004a,paraz 2004b,paraz 2004c,parnd shahini,paraz taghavi} have applied some Rational Chebyshev and Legendre functions by tau and collocation method which were used to solve nonlinear ordinary differential equations on semi-infinite intervals. Author of~\cite{paraz 2004a,paraz 2004b,paraz 2004c} have obtained the operational matrices of the derivative and product of rational Chebyshev and Legendre functions to reduce the solution of these problems to the solution of some algebraic equations.\\

 The Lane-Emden equation has been used to model several phenomenas in theoretical physics, mathematical physics and astrophysics as the theory of stellar structure. It appears in other contexts, for example in case of radiatively cooling, self-gravitating gas clouds, in the mean-field treatment of a phase transition in critical absorption or in the modeling of clusters of galaxies. It has been classified as singular initial value problems in the second order ordinary differential equations. This equation is to honor astrophysicists Jonathan Homer Lane and Robert Emden which is called Lane-Emden.~\cite{bender 1986,mandel 2001,shawag 1993,ramos 2005,yousefi 2006,davis 2006}\\
 Consider the Poisson equation and the condition for hydrostatic equilibrium~\cite{chandr 1967,46,dehghan 2008,agarwal 2007}:
 \begin{equation}
 \frac{{dP}}{{dr}} =  - \rho \frac{{GM(r)}}{{{r^2}}},
 \end{equation}

 \begin{equation}
 \frac{{dM(r)}}{{dr}} = 4\pi \rho {r^2},
 \end{equation}
 where $G$ is the gravitational constant, $P$ is the pressure, $M(r)$ is
the mass of a star at a certain radius $r$, and $\rho$ is the density, at a
distance $r$ from the center of a spherical star~\cite{chandr 1967}.The combination of these equations yields the following equation, which as should be noted, is an equivalent form of the Poisson equation~\cite{chandr 1967,46,dehghan 2008,agarwal 2007}:

\begin{equation}
 \frac{1}{{{r^2}}}\frac{d}{{dr}}\left( {\frac{{{r^2}}}{\rho }\frac{{dP}}{{dr}}} \right) =  - 4\pi G\rho .
\end{equation}

From these equations one can obtain the Lane-Emden type equation through the simple assumption that the density is simply
related to the density, while remaining independent of the temperature. We already know that in the case of a degenerate electron
gas, the pressure and density are$~\rho  \sim {P^{\frac{3}{5}}}$, assuming that such a relation exists for other states of the star, we are led to consider a relation of the following form~\cite{chandr 1967}:

\begin{equation}
P = K{\rho ^{1 + \frac{1}{m}}},
\end{equation}

where $K$ and $m$ are constants and $m$ is the polytropic index which is related to the ratio of specific heats of the gas comprising the star.Based upon these
assumptions we can insert this relation into our first equation for
the hydrostatic equilibrium condition and from this equation ~\cite{chandr 1967,46,dehghan 2008,agarwal 2007} we have
\begin{equation}
\left[ {\frac{{K\left( {m + 1} \right)}}{{4\pi G}}{\lambda ^{\frac{1}{m} - 1}}} \right]\frac{1}{{{r^2}}}\frac{d}{{dr}}\left( {{r^2}\frac{{dy}}{{dr}}} \right) =  - {y^m},
\end{equation}
where the additional alteration to the expression for density has
been inserted with $\lambda$ representing the central density of the star
and $y$ that of a related dimensionless quantity that are both related to $\rho$ through the following relation~\cite{chandr 1967,46,dehghan 2008,agarwal 2007}
\begin{equation}
\rho  = \lambda {y^m},
\end{equation}
Additionally, if place this result into the Poisson equation, we obtain a differential equation for the mass, with a dependence upon the polytropic index $m$. Though the differential equation is seemingly difficult to solve, this problem can be partially alleviated by the introduction of an additional dimensionless variable $x$ ,given by the following:
\begin{equation}
r = ax
\end{equation}
\begin{equation}
a = {\left[ {\frac{{K\left( {m + 1} \right)}}{{4\pi G}}{\lambda ^{\frac{1}{m} - 1}}} \right]^{\frac{1}{2}}}
\end{equation}
Inserting these relations into our previous equations we obtain the
famous form of the Lane-Emden type equations, given in the following:
\begin{equation}
\frac{1}{{{x^2}}}\frac{d}{{dx}}\left( {{x^2}\frac{{dy}}{{dx}}} \right) =  - {y^m}
\end{equation}
Taking these simple relations we will have the standard Lane-Emden equation with $g(y)=y^m$ ~\cite{chandr 1967,46,dehghan 2008,agarwal 2007},
\begin{equation}
y'' + \frac{2}{x}y' + {y^m} = 0,~~~x > 0.\label{standard}
\end{equation}
At this point it is also important to introduce the boundary conditions which are based upon the following boundary conditions
for hydrostatic equilibrium and normalization consideration of the
newly introduced quantities $x$ and $y$. What follows for $r=0$ is
\begin{equation}
r = 0 \to x = 0,\rho  = \lambda  \to y\left( 0 \right) = 1.\label{bond}
\end{equation}
As a result an additional condition must be introduced in order to
maintain the condition of equation\eqref{bond} simultaneously:
\begin{equation}
y'=0
\end{equation}
In other words, the boundary conditions are as follows
\begin{equation}
y\left( 0 \right) = 1,y'\left( 0 \right) = 0
\end{equation}
The values of $m$ which are physically interesting, lie in the interval
[0,5]. The main difficulty in the analysis of this type of equation is
the singularity behavior occurring at $x=0$.
Exact solutions for equation \eqref{standard} are known only for$m=0,1$ and $5$. For other values of $m$ the standard Lane-Emden equation is to
be integrated numerically. Thus we decided to present a new and efficient technique to solve it numerically.\\
This paper is organized as follows: In Section 2 we survey several methods that have been used to
solve Lane-Emden type equations. In Section 3, the properties of
Chebyshev Function of the Second Kind and the way to construct the collocation technique for this type of equation are described. In Section 4 the proposed method is applied to some types of Lane-Emden equations, and a comparison is made with the existing analytic or exact solutions that were reported in other published works in the literature. Finally we give a brief conclusion in the last section.

\section{The Methods have been proposed to solve Lane-Emden type equations}
Many analytical methods have been applied to solve Lane-Emden type equations. The major problem has been raised in the singularity of the equations at $x=0$.\\

Bender et al.\cite{bender 1986} have shown the perturbative technique by using $\delta$-method has given excellent result when applied to difficult non-linear differential equation such as Lane-Emden.\\

Kara et al.\cite{kara92} have shown the Lagrangian method to solve Emden equation. They have categorized in non-linearzable equations and one year later \cite{kara93} they have found  general  solutions  of  Lane-Emden equation  by  determining  the  Lie  point  symmetries and,  hence,  the Lie algebra  it generates.\\

Shawagfeh\cite{shawag 1993} has approached  the  problem  differently  by  utilizing  the  newly  developed  Adomian  decomposition  method to solve Lane-Emden equation and has  used  this  method  to  derive  a   nonperturbative  approximate solution  in  the  form  of  power  series  which  is  easily  computable  and  accurate. To accelerate the convergence he has used the Pade` approximates method.\\

Mandelzweig et al.\cite{mandel 2001} have shown that the quasilinearization method (QLM) gives excellent results when applied to different nonlinear ordinary differential equations in physics, such as Lane-Emden equation.\\

Wazwaz \cite{29} has employed a reliable algorithm rests mainly on the Adomian decomposition method with an alternate framework designed to overcome the difficulty of the singular point and he has proposed framework has been applied to a generalization of Lane-Emden equations.

Liao \cite{33} has introduced an analytic algorithm logically contains the well-known Adomian decomposition method, provides for Lane-Emden equation. This algorithm itself provides us with a convenient way to adjust convergence regions even without Pade technique.

He \cite{34} has presented by the semi-inverse method, a variational rinciple is obtained for the Lane-Emden equation, which gives much numerical convenience when applying finite element methods or Ritz method.

Ramos \cite{35} has uttered Linearization methods provide piecewise linear ordinary differential equations which can be easily integrated, and provide accurate answers even for hypersingular potentials, for which perturbation methods diverge and he has shown the applicability and accuracy of linearization methods for initial-value problems in ordinary differential equations are verified on examples that include the Lane-Emden equation, and some other equations.

Parand et al. \cite{20} has expressed numerical method based on rational Legendre tau for solving the Lane-Emden nonlinear differential equation. They have provided the operational matrices of derivative and product of rational Legendre functions.

Ramos \cite{36} has employed Linearization methods result in linear constant-coefficients ordinary differential equations which can be integrated analytically, thus yielding piecewise analytical solutions and globally smooth solutions.

Wazwaz \cite{32} has shown the modified decomposition method which has been applied for analytic treatment of nonlinear differential equations such as Lane-Emden. This method accelerates the rapid convergence of the series solution, dramatically reduces the size of work, and provides the exact solution by using few iterations only without any need to the so-called Adomian polynomials.

Yusefi \cite{yousefi 2006} has presented a numerical method by using integral operator and convert Lane-Emden equations to integral equations and then applying Legendre wavelet approximations.

Ramos \cite{37} has detailed series solutions of the Lane-Emden equation based on either a Volterra integral equation formulation or the expansion of the dependent variable in the original ordinary differential equation and has compared with series solutions obtained by means of integral or differential equations based on a transformation of the dependent variables.

Aslanov \cite{46} has constructed a recurrence relation for the components of the approximate solution and has investigated the convergence conditions of the Emden-Fowler type of equations.

Razzaghi et al \cite{48} have given numerical method based upon hybrid function approximations for solving nonlinear initial-value problems such as Lane-Emden type equations.The properties of hybrid of block-pulse functions and Lagrange interpolating polynomials have presented and and have utilized to reduce the computation of nonlinear initial-value problems to a system of non-algebraic equations.

Dehghan et al.\cite{55} have investigated The Lane-Emden using the variational iteration method and it have been shown the efficiency and applicability of this procedure for solving this equation.

Hashim et al. \cite{42} have obtained approximate and/or exact analytical solutions of the generalized Emden-Fowler type equations in the second-order ordinary differential equations (ODEs) by homotopy-perturbation method (HPM).

Parand et al \cite{23} have represented a pseudospectral technique is to solve the Lane-Emden type equations on a semi-infinite domain. The method is based on rational Legendre functions and Gauss-Radau integration.

Ramos \cite{38} has shown that the appearance of noise terms in the decomposition method is related to both the differential equation and the manner in which the homotopy parameter is introduced, especially for the Lane-Emden equation.

Bataineh et al \cite{40} have stated approximate and/or exact analytical solutions of singular initial value problems (IVPs) of the Emden-Fowler type in the second-order ordinary differential equations (ODEs) are obtained by the homotopy analysis method (HAM).

\"{O}zi\c{s} et al.\cite{47} have declared, approximate-exact solutions of a class of Lane-Emden type singular IVPs problems, by the variational iteration method.

Singh et al. \cite{49} have shown an efficient analytic algorithm for Lane-Emden type equations using modified homotopy analysis method, which is different from other analytic techniques as it itself provides us with a convenient way to adjust convergence regions even without Pade technique.

Parand et al.\cite{-1} have provide a pseudo spectral method for Lane-Emden equation which based on some orthogonal functions.

Parand et al.\cite{-3} have imparted a method based on the rational Legendre functions and Gauss-Radau integration to solve Lane-Emden type equations on a semi-infinite domain.

Geng et al.\cite{-4} have investigated the nonlinear singular initial value problems including generalized Lane-Emden-type equations by combining homotopy perturbation method (HPM) and reproducing kernel Hilbert space method (RKHSM).

Karimi et al.\cite{-5} have presented a numerical method which produces an approximate polynomial solution for solving Lane-Emden equations as singular initial value problems. They have used an integral operator to convert Lane-Emden equations into integral equations the convert the acquired integral equation into a power series and finally, transforming the power series into Padé series form, they have obtained an approximate polynomial of arbitrary order to solve Lane-Emden equations.

Gorder \cite{-6} has applied the $\delta$-expansion method to a transformed Lane-Emden equation then the results have been transformed back, and has been recovered analytical solutions to the Lane-Emden equation of the second kind in a special case. The rapid convergence of the method results in qualitatively accurate solutions in relatively few iterations, as we see when we compare the obtained analytical solutions to numerical results.

Wazwaz et al. \cite{-8} have provided a comparison of the Adomian decomposition method (ADM) with the variational iteration method (VIM) for solving the Lane-Emden equations of the first and second kinds.

Yüzbasi \cite{-9} has mentioned a collocation method based on the Bessel polynomials is presented for the approximate solution of a class of the nonlinear Lane-Emden type equations.

Bhrawy et al. \cite{-10} have conveyed a shifted Jacobi pseudo-spectral method JPSM, along with the Boubaker Polynomials Expansion Scheme (BPES), for solving a nonlinear Lane-Emden type equation.

Bhrawy et al. \cite{-11} have related a shifted Jacobi-Gauss collocation spectral method for solving the nonlinear Lane-Emden type equation.

Caglar et al. \cite{-12} have solved a time-dependent heat-like Lane-Emden equation by using a non-polynomial spline method.

Shen \cite{-13} has combined the compactly supported radial basis function (RBF) collocation method and the scaling iterative algorithm to compute and visualize the multiple solutions of the Lane-Emden-Fowler equation on a bounded domain $\Omega\subset R^{2}$ with a homogeneous Dirichlet boundary condition.

Pandey et al. \cite{-14} have developed an efficient numerical method for solving linear and nonlinear Lane-Emden type equations using Legendre operational matrix of differentiation.

Pandey et al. \cite{-15} have applied the First Bernstein operational matrix of differentiation derived using Bernstein polynomials to solve the linear and nonlinear differential equations of Lane-Emden type.

Biezuner et al. \cite{-17} have introduced an iterative method to compute the first eigenpair $(\lambda_{p},e_{p})$ for the $p$-Laplacian operator with homogeneous Dirichlet data as the limit of $(\mu_{q},u_{q})$ as $q\rightarrow p^{-}$, where $u_{q}$ is the positive solution of the sublinear Lane-Emden equation $-\Delta_{p}u_{q}=\mu_{q}u_q^{q - 1}$ with the same boundary data.

Boubaker et al. \cite{-18} have used the Boubaker Polynomials Expansion Scheme (BPES) in order to obtain analytical-numerical solutions to the Lane-Emden initial value problem of the first kind and second kind.

Rismani et al. \cite{-19} have utilized the improved Legendre-spectral method to solve Lane-Emden type equations.

Jalab et al. \cite{-20} have shown a numerical method based on neural network, for solving the Lane-Emden equations singular initial value problems. The numerical solution has been given for integer case and non integer case. The non integer case is taken in the sense of Riemann-Liouville operators.

Wazwaz et al. \cite{-21} have used the systematic Adomian decomposition method to handle the integral form of the Lane-Emden equations with initial values and boundary conditions and have confirmed their belief that the Adomian decomposition method (has provided) provides efficient algorithm for analytic approximate solutions of the equation.

Karimi Vanani et al. \cite{-22} have presented a numerical algorithm based on an operational Tau method (OTM) for solving the Lane-Emden equations as singular initial value problems.

Parand et al \cite{-23} have introduced the Bessel orthogonal functions as new basis for spectral methods and also present an efficient numerical algorithm based on them and collocation method to solve these well-known equations.

Kaur et al. \cite{-25} have provided a technique to investigate the solutions of generalized nonlinear singular Lane-Emden equations of first and second kinds by using a Haar wavelet quasi-linearization approach.

\"{O}zt\"{u}rk et al. \cite{-26} have stated the numerical solution of Lane-Emden equations by using truncated shifted Chebyshev series together with the operational matrix.

Razzaghi \cite{-27} has proposed a numerical method based on hybrid function approximations and Bernoulli polynomials has been presented and utilized to reduce the computation of nonlinear initial-value problems such as Lane-Emden type equations to a system of equations.

Li et al \cite{-28} have computed and visualized multiple solutions of the Lane-Emden equation on a square and a disc, using Legendre and Fourier-Legendre pseudospectral methods based on the Liapunov-Schmidt reduction and symmetry-breaking bifurcation theory.

Wazwaz \cite{-30} has established the Volterra integro-differential forms of the Lane-Emden equations and has used the variational iteration method (VIM) to effectively treat these forms.

Rach et al. \cite{-31} have considered the coupled Lane-Emden boundary value problems in catalytic diffusion reactions by the Adomian decomposition method.

Mohammadzadeh et al. \cite{-32} has shown three numerical techniques based on cubic Hermite spline functions for the solution of Lane-Emden equation.

Wazwaz et al. \cite{-33} have investigated systems of Volterra integral forms of the Lane-Emden equations have used the systematic Adomian decomposition method to handle these systems of integral forms.

\"{O}zt\"{u}rk et al \cite{-34} have shown numerical method depends on collocation method and based on first taking the truncated Hermite series of the solution function, transforms Lane-Emden type equation and given conditions into a matrix equation,solving the system of algebraic equations using collocation points and found the coefficients of the truncated Hermite series.

Wazwaz et al. \cite{-35} have presented new alternate derivations for the Volterra integral forms of the Lane-Emden equation for the shape factor of k=1, where an alternate derivation for L'Hospital's formula will be developed.

G\"{u}rb\"{u}z et al. \cite{-36} have mentioned the numerical method is based on the matrix relations of Laguerre polynomials and their derivatives, and reduces the solution of the Lane-Emden type functional differential equation to the solution of a matrix equation corresponding to system of algebraic equations with the unknown Laguerre coefficients.

\section{Properties of Chebyshev Function of the Second Kind}
The chebyshev polynomials in second kind are defined on $\left[ {- 1,1} \right]$ by
\begin{equation}
{U_n}\left( x \right) = \frac{{\sin \left( {n + 1} \right)\theta }}{{\sin \theta }},x = \cos \theta ,n = 0,1,2, \cdots
\end{equation}
with respect to weight function $w\left( x \right) = \sqrt {1 - {x^2}} $ and can be determined following recurrence formula:
\begin{equation}
{U_0}\left( x \right) = 1,{U_1}\left( x \right) = 2x
                                            \nonumber \\
\end{equation}
\begin{equation}
{U_n}\left( x \right) = 2x{U_{n - 1}}\left( x \right) - {U_{n - 2}}\left( x \right),n = 2,3,4, \cdots \label{rec}
\end{equation}
We can obtain another formula for derivative of $U_n(x)$ as
\begin{equation}
\frac{d}{{dx}}{U_n}(x) = \frac{{(n + 2){U_{n - 1}}(x) - n{U_{n + 1}}(x)}}{{2(1 - {x^2})}} \label{du}
\end{equation}

Since the range $\left[ {0, + \infty } \right)$ quite often more convenient to use than the range $\left[ {-1, +1} \right]$, we sometimes map the independent variable $x$ in $\left[ {0, + \infty } \right)$ to the variable $s$ in $\left[ {-1, +1} \right]$ by the transformation
\begin{equation}
s = \frac{{x - 1}}{{x + 1}} ~~or~~ x = \frac{{s + 1}}{{1 - s}}
\end{equation}
and this leads to a shifted Chebyshev polynomial (of the second kind) $U_n^*$ of degree $n$ in $x$ on $\left[ {0, + \infty } \right)$ given by
\begin{equation}
U_n^*\left( x \right) = {U_n}\left( s \right) = {U_n}\left( {\frac{{x - 1}}{{x + 1}}} \right)\label{shift}
\end{equation}
with respect to weight function ${w^*}(x) = \frac{{4\sqrt x }}{{{{\left( {x + 3} \right)}^3}}}$.
The $U_n^*$ has been called Rational Chebyshev of Second kind (RCS).
From \eqref{shift} and \eqref{rec}, we may deduce the recurrence relation for $U_n^*$ in the form
\begin{equation}
{U_n^*}\left( x \right) = 2\left( {\frac{{x - 1}}{{x + 1}}} \right){U_{n - 1}^*}\left( x \right) - {U_{n - 2}^*}\left( x \right),n = 2,3,4, \cdots \label{shiftrec}
\end{equation}
with initial conditions
\begin{equation}
{U_0^*}\left( x \right) = 1,{U_1^*}\left( x \right) = 2\left( {\frac{{x - 1}}{{x + 1}}} \right)
\end{equation}
As same like \eqref{du} we can obtain derivative of $U^*_n(x)$ as follow
\begin{equation}
\frac{d}{{dx}}{U^*_n}(x) = \frac{2}{{{{(1 + x)}^2}}} \frac{{(n + 2){U^*_{n - 1}}(x) - n{U^*_{n + 1}}(x)}}{{2(1 - {\left( {\frac{{x - 1}}{{x + 1}}} \right)^2})}} \label{dustar}
\end{equation}

\begin{theorem}
In the Gauss–Chebyshev formula
\begin{equation}
\int\limits_{ - 1}^1 {f(x)w(x)dx \simeq \sum\limits_{k = 1}^n {{A_k}f({x_k})} }
\end{equation}
where$\{ x_k\}$ are the $n$ zeros of ${\phi}_n(x)$,the coefficients $A_k$ are as follows:\\
for $w(x)={(1 - x)^{ - \frac{1}{2}}},{\phi}_n(x)={U_n}(x) ~ and ~ A_k=(1-x^2_k)$
\end{theorem}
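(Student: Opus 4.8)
\section*{Proof proposal}

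The plan is to invoke the standard Gaussian-quadrature machinery and then specialize to the second-kind weight. First I would recall that for any admissible weight $w$ on $[-1,1]$ with associated orthogonal polynomials $\{\phi_j\}$, the $n$-point rule whose nodes $x_1,\dots,x_n$ are the zeros of $\phi_n$ is exact on all polynomials of degree at most $2n-1$, and its coefficients are obtained by integrating the Lagrange cardinal polynomials,
\begin{equation}
A_k=\int_{-1}^{1}\ell_k(x)\,w(x)\,dx,\qquad \ell_k(x)=\prod_{j\neq k}\frac{x-x_j}{x_k-x_j}.
\end{equation}
Specializing to the weight $w(x)=\sqrt{1-x^2}$ for which the $U_n$ are orthogonal (as in the definition above) and taking $\phi_n=U_n$, the nodes are $x_k=\cos\theta_k$ with $\theta_k=\tfrac{k\pi}{n+1}$, $k=1,\dots,n$, since these are exactly the zeros of $U_n(\cos\theta)=\sin((n+1)\theta)/\sin\theta$.

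Rather than integrate $\ell_k$ directly, I would use the Christoffel--Darboux form of the weights,
\begin{equation}
A_k=\Bigl(\sum_{j=0}^{n-1}\widehat{U}_j(x_k)^2\Bigr)^{-1},
\end{equation}
where $\widehat{U}_j=U_j/\sqrt{\langle U_j,U_j\rangle}$ are the orthonormal polynomials and $\langle U_j,U_j\rangle=\int_{-1}^{1}U_j(x)^2\,w(x)\,dx=\pi/2$ for every $j$. Substituting $U_j(\cos\theta_k)=\sin((j+1)\theta_k)/\sin\theta_k$ converts the sum into
\begin{equation}
\sum_{j=0}^{n-1}\widehat{U}_j(x_k)^2=\frac{2}{\pi\sin^2\theta_k}\sum_{m=1}^{n}\sin^2(m\theta_k).
\end{equation}

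The crux is the trigonometric sum $\sum_{m=1}^{n}\sin^2(m\theta_k)$. Writing $\sin^2\alpha=\tfrac12(1-\cos 2\alpha)$ and summing the resulting geometric series in $e^{2im\theta_k}$, I would use the key identity $e^{2i(n+1)\theta_k}=e^{2ik\pi}=1$ to collapse the cosine part to $-1$, giving $\sum_{m=1}^{n}\sin^2(m\theta_k)=\tfrac{n+1}{2}$. Hence $\sum_{j=0}^{n-1}\widehat{U}_j(x_k)^2=\tfrac{n+1}{\pi\sin^2\theta_k}$, and since $\sin^2\theta_k=1-\cos^2\theta_k=1-x_k^2$, this yields
\begin{equation}
A_k=\frac{\pi}{n+1}\,(1-x_k^2),
\end{equation}
which exhibits the claimed proportionality to $1-x_k^2$. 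The only delicate point is the exact cancellation in the cosine sum, which hinges on the nodes being precisely the zeros of $U_n$, so that $(n+1)\theta_k$ is an integer multiple of $\pi$; the remainder is bookkeeping with the normalization constant $\pi/2$.
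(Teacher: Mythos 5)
Your derivation is mathematically sound, but it is worth knowing that the paper does not actually prove this theorem at all: its ``proof'' consists solely of a citation to Mason and Handscomb's book on Chebyshev polynomials. So your argument is necessarily a different route --- it is a genuine, self-contained derivation where the paper offers none. The route you chose (the Christoffel-number formula $A_k=\bigl(\sum_{j=0}^{n-1}\widehat{U}_j(x_k)^2\bigr)^{-1}$, the explicit nodes $\theta_k=k\pi/(n+1)$, and the collapse of $\sum_{m=1}^{n}\cos(2m\theta_k)$ to $-1$ via $e^{2i(n+1)\theta_k}=1$) is the standard textbook one and every step checks out: the geometric-series cancellation does hinge exactly on the nodes being zeros of $U_n$, as you say, and the normalization $\langle U_j,U_j\rangle=\pi/2$ is correct. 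What your derivation buys, beyond filling the gap the citation leaves, is that it exposes two defects in the theorem as stated: the weight for $U_n$ should be $\sqrt{1-x^2}$, not $(1-x)^{-1/2}$ as printed, and the correct weights are $A_k=\frac{\pi}{n+1}(1-x_k^2)$, so the stated $A_k=(1-x_k^2)$ is missing the factor $\pi/(n+1)$. You gesture at this by saying your formula ``exhibits the claimed proportionality,'' but you should state plainly that the theorem as written is off by that constant --- without it the quadrature rule is not even exact on $f\equiv 1$.
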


\begin{proof}
The complete proof has been shown by Mason et al. in \cite{-37}
\end{proof}

Let ${w^*}(x) = \frac{{4\sqrt x }}{{{{\left( {x + 3} \right)}^3}}}$ denote a non-negative, integrable, real-valued function over the interval $I=\left[ {0, + \infty } \right)$. We have defined
\begin{equation}
L_{{w^*}}^2\left( I \right) = \left\{ {v:I \to \mathbb{R}|v{\text{ is measurable and }}{{\left\| v \right\|}_{{w^*}}} < \infty } \right\},
\end{equation}
where
\begin{equation}
{\left\| v \right\|_{{w^*}}} = {\left( {\int\limits_0^\infty  {{{\left| {v\left( x \right)} \right|}^2}{w^*}\left( x \right)dx} } \right)^{\frac{1}{2}}}
\end{equation}
is the norm induced by the scalar product
\begin{equation}
{\left\langle {u,v} \right\rangle _{{w^*}}} = \int\limits_0^\infty  {u\left( x \right)v\left( x \right)} {w^*}\left( x \right)dx\label{inner}
\end{equation}
Thus ${\left\{ {U_n^*\left( x \right)} \right\}_{n \geqslant 0}}$ denote a system which are mutually orthogonal
under \eqref{inner}, i.e
\begin{equation}
{\left\langle {U_n^*\left( x \right),U_m^*\left( x \right)} \right\rangle _{{w^*}}} =\frac{\pi }{2}{\delta _{nm}}
\end{equation}
where $\delta _{nm}$ is the Kronecker delta function. This system is complete in $L_{{w^*}}^2\left( I \right)$. For any function $u\in L_{{w^*}}^2\left( I \right)$ the following expansion holds
\begin{equation}
f(x) \cong \sum\limits_{i = -N}^{+N}  {{a_i}U_i^*(x)}\label{faprox}
\end{equation}
\begin{equation}
{a_k} = \frac{{{{\left\langle {u,U_k^*} \right\rangle }_{{w^*}}}}}{{\left\| {U_k^*} \right\|_{{w^*}}^2}}
\end{equation}
The $a_k$'s are the expansion coefficients associated with the family $\left\{ {U_n^*\left( x \right)} \right\}$.
Now we can define an orthogonal projection based on the transformed Chebyshev functions in second kind as given below: Let
\begin{equation}
{\mho _N^*} = span\{U_0^*(x),U_1^*(x),\ldots,U_n^*(x)\}
                                            \nonumber \\
\end{equation}
The ${L^2}(I)$-orthogonal projection ${\tilde \xi _N}:{L^2}\left( I \right) \to {\mho_N^*}$ is a mapping in
a way that for any $y\in {L^2}(I)$,
\begin{equation}
\left\langle {{{\tilde \xi }_N}y - y,\phi } \right\rangle  = 0,\forall \phi  \in {\mho _N^*}
                                            \nonumber \\
\end{equation}

or equivalently,
\begin{equation}
{{\tilde \xi }_N} y = \sum\limits_{i = 0}^N {{a_i}{U_i^*(x)}}\label{opr}
\end{equation}

\section{Applications}
In this section we have applied Rational Chebyshev of Second kind Collocation method for the computation of Lane-Emden type equations based
on the Rational Chebyshev of Second kind functions. In general the Lane-Emden type equations are formulated as
\begin{equation}
y'' + \frac{\alpha}{x}y' + f(x)g(x) = h(x),~~~\alpha x \geq 0. \label{glane}
\end{equation}
with initial conditions
\begin{equation}
~y(0)=A,~y'(0)=B \nonumber
\end{equation}
where $\alpha$, $A$ and $B$ are real constants and $f(x)$, $g(y)$ and $h(x)$
are some given functions.

We apply the Rational Chebyshev of Second kind functions collocation
method to solve some well-known Lane-Emden type equations for
various $f(x)$, $g(y)$, $A$ and $B$, in two cases homogeneous ($h(x)=0$)
and non-homogeneous ($h(x) \neq 0$).

To satisfy  boundary conditions we can add
the \eqref{opr} by $P(x) = A + Bx$, therefore the \eqref{opr} has been changed as follows:\\
\begin{equation}
{{\tilde \xi }_N} y(x)=P(x)+x^2{{\tilde \xi }_N} y(x).\nonumber
\end{equation}
Now for boundary conditions we have ${{\tilde \xi }_N} y(x)=A$ and
$\frac{d}{{dx}}{{\tilde \xi }_N} y(x)=B$ when $x$ tends to zero.\\

To apply the collocation method, we have constructed the residual
function by substituting $y(x)$ by ${{\tilde \xi }_N} y(x)$ in the Lane-Emden type equation \eqref{glane}:\\
\begin{equation}
\operatorname{Res} (x) = \frac{{{d^2}}}{{d{x^2}}}{{\tilde \xi }_N}y(x) + \frac{\alpha }{x}\frac{d}{{dx}}{{\tilde \xi }_N}y(x) + f(x)g({{\tilde \xi }_N}y(x)) - h(x). \nonumber\\
\end{equation}\\
There is no limitation to choose the point in collocation method. In this case, some points with equal distance from each other in $[0,q]$ have been chosen.~The amount of $q$ has been provided by physical view of Lane-Emden equation.
$N+1$ points with equal distance from each other in $[0,q]$ have been substituted in $Res(x)$:\\
\begin{equation}
Res(x_j) = 0 ,j=0,1,2,3,\ldots,N \nonumber
\end{equation}
A non-linear system of equations has been obtained. Some numerical methods should be used as Newton's method. The Maple version 17 has been used to solve the non-linear system of equations. Solving these types of equations system, Maple software has been used the Newton's method and advanced algorithm. The Maple $fsolve$ has been used for all of unknown coefficients with an initial zero value.
The unknown coefficients $a_i$ will be obtained and approximation of ${{\tilde \xi }_N} y(x)$ is the result.

\subsection{Example 1 (The standard Lane-Emden equation)}
According to Eq.\eqref{glane}, if $f(x) = 1,g(y) = y^m,A=1$ and $B=0$ the standard Lane-Emden equation has been defined as follows:\\
\begin{equation}
y''(x) + \frac{2}{x}y'(x) + {y^m}(x) = 0,x \geqslant 0 \label{sle}
\end{equation}
with boundary conditions\\
\begin{equation}
~y(0)=1,~y'(0)=0 \nonumber
\end{equation}
where $m \geq 0$ is constant.\\
In Eq. \eqref{sle} for $m=0$ exact solution is $y(x) = 1 - \frac{1}{{3!}}{x^2}$, for $m=1$ is $y(x) = \frac{{\sin (x)}}{x}$ and for $m=5$ is $y(x) = {\left( {1 + \frac{{{x^2}}}{3}} \right)^{ - \frac{1}{2}}}$.
\\
The Rational Chebyshev of Second kind Collocation method has been applied to solve Eq. \eqref{sle} for $m=1.5,2,2.5,3$ and $4$ because there is no analytic exact solution for other case of $m$ values so the residual function has been constructed as:
\begin{equation}
\operatorname{Res} (x) = \frac{{{d^2}}}{{d{x^2}}}{{\tilde \xi }_N}y(x) + \frac{2}{x}\frac{d}{{dx}}{{\tilde \xi }_N}y(x) + {\left( {{{\tilde \xi }_N}y(x)} \right)^m}.
\end{equation}
As it was mentioned, $N+1$ points with equal distance from each other in $[0,q]$ have been substituted in $Res(x)$:\\
\begin{equation}
Res(x_j) = 0 ,j=0,1,2,3,\ldots,N \nonumber
\end{equation}
A non-linear system of equations has been obtained. Some numerical methods should be used as Newton's method. The Maple version 17 has been used to solve the non-linear system of equations. Solving these types of equations system, Maple software has been used the Newton's method and advanced algorithm. The Maple $fsolve$ has been used for all of unknown coefficients with an initial zero value.
The unknown coefficients $a_i$ will be obtained and approximation of ${{\tilde \xi }_N} y(x)$ is the result.

Table \ref{table:m table} have shown comparison of the first zeros of standard Lane-Emden equations, for the current
method and exact numerical values given by Horedt \cite{-38} for $m=1.5,2,2.5,3$ and $4$ with $N=20$
\cref{table:2m,table:3m} have represented approximations of $y(x)$ for the standard
Lane-Emden equation for $m=2,3$ respectively obtained by Horedt  \cite{-38} and the method proposed in this paper\\ and it shows the accuracy from this method and the exact value is equal up to 8 digits.

\cref{table:1.5m,table:2.5m,table:4m} have represented approximations of $y(x)$ for the standard
Lane-Emden equation for $m=1.5,2.5,3.5$ and $4$ respectively obtained by by Horedt  \cite{-38} and the method proposed in this paper\\

\cref{table:ai} has shown the coefficients of the Chebyshev of the Second kinds functions obtained by the curent method for $m=1.5,2,2.5,3$ and $4$ of the standard Lane-Emden equation.\\
The result graph of the standard Lane-Emden equation for $m=1.5,2,2.5,3,3.5$ and $4$ has  shown in \cref{fig:mi}.
The logarithmic graph of absolute coefficients of Chebyshev of the Second kinds of functions of standard Lane-Emden equation for $m=1.5$ is shown in \cref{fig:ai}. The coefficients in \cref{table:ai} has shown that there is an appropriate convergence rate for the new method.

\begin{table}[!hbp]
\caption{Comparison of the first zeros of standard Lane-Emden equations, for the current
method and exact numerical values given by Horedt \cite{-38}}

\begin{tabular}{|l|c|c|c|}
\hline
$m$ & N & Present method & Exact Value \\
\hline
$1.50$ & $20$ & $3.65375374E+00$ & $3.65375374E+00$ \\
$2.00$ & $20$ &	$4.35287460E+00$ & $4.35287460E+00$ \\
$2.50$ & $20$ &	$5.35527546E+00$ & $5.35527546E+00$ \\
$3.00$ & $20$ &	$6.89684862E+00$ & $6.89684862E+00$ \\
$4.00$ & $20$ &	$1.49715463E+01$ & $1.49715463E+01$ \\

\hline
\end{tabular}
\label{table:m table}
\end{table}

\begin{table}[!hbp]\label{1.5m}
\caption{Comparison of $y(x)$ values of standard Lane-Emden equation, for the current method and exact values given by Horedt for m=1.5 \cite{-38}}

\begin{tabular}{|l|c|c|c|}
\hline
$x$ & Present method & Exact Value & Error\\
\hline
$0.00$ & $1.00000000E+00$ & $1.00000000E+00$ & $0.00000000E+00$ \\
$0.10$ & $9.98334583E-01$ & $9.98334600E-01$ & $1.70959980E-08$ \\
$0.50$ & $9.59103857E-01$ & $9.59103900E-01$ & $4.30635190E-08$ \\
$1.00$ & $8.45169755E-01$ & $8.45169800E-01$ & $4.45139320E-08$ \\
$3.00$ & $1.58850614E-01$ & $1.58857600E-01$ & $6.98614498E-06$ \\
$3.60$ & $1.10779151E-02$ & $1.10909900E-02$ & $1.30748847E-05$ \\
$3.65$ & $7.62752205E-04$ & $7.63924200E-04$ & $1.17199471E-06$ \\
$3.6537537$	& $2.44970506E-10$ & $0.00000000E+00$ & $0.00000000E+00$ \\
\hline

\end{tabular}
\label{table:1.5m}
\end{table}

\begin{table}[!hbp]
\caption{Comparison of $y(x)$ values of standard Lane-Emden equation, for the current method and exact values given by Horedt for m=2 \cite{-38}}

\begin{tabular}{|l|c|c|c|}
\hline
$x$ & Present method & Exact Value & Error\\
\hline
$0.00$ & $  1.00000000E+00$ & $	1.00000000E+00$ & $	0.00000000E+00$ \\
$0.10$ & $	9.98334988E-01$ & $	9.98335000E-01$ & $	1.17588950E-08$ \\
$0.50$ & $	9.59352705E-01$ & $	9.59352700E-01$ & $	4.98044506E-09$ \\
$1.00$ & $	8.48654103E-01$ & $	8.48654100E-01$ & $	2.86298907E-09$ \\
$3.00$ & $	2.41824078E-01$ & $	2.41824100E-01$ & $	2.16966440E-08$ \\
$4.00$ & $	4.88401323E-02$ & $	4.88401500E-02$ & $	1.77484112E-08$ \\
$4.30$ & $	6.81092861E-03$ & $	6.81094300E-03$ & $	1.43878525E-08$ \\
$4.35$ & $	3.66029676E-04$ & $	3.66030200E-04$ & $	5.24343265E-10$ \\
$4.35287460$ & $	0.00000000E+00$ & $	0.00000000E+00$ & $	0.00000000E+00$ \\

\hline

\end{tabular}
\label{table:2m}
\end{table}

\begin{table}[!hbp]
\caption{Comparison of $y(x)$ values of standard Lane-Emden equation, for the current method and exact values given by Horedt for m=2.5 \cite{-38}}

\begin{tabular}{|l|c|c|c|}
\hline
$x$ & Present method & Exact Value & Error\\
\hline
$0.00$ & $	1.00000000E+00$ & $	1.00000000E+00$ & $	0.00000000E+00$ \\
$0.10$ & $	9.98200161E-01$ & $	9.98335400E-01$ & $	1.35238951E-04$ \\
$0.50$ & $	9.59597754E-01$ & $	9.59597800E-01$ & $	4.55336809E-08$ \\
$1.00$ & $	8.51944199E-01$ & $	8.51944200E-01$ & $	8.71904993E-10$ \\
$4.00$ & $	1.37680751E-01$ & $	1.37680700E-01$ & $	5.09351070E-08$ \\
$5.00$ & $	2.90198639E-02$ & $	2.90191900E-02$ & $	6.73893582E-07$ \\
$5.30$ & $	4.25986419E-03$ & $	4.25954400E-03$ & $	3.20185644E-07$ \\
$5.355$ & $	2.10110537E-05$ & $	2.10089000E-05$ & $	2.15367148E-09$ \\
$5.35527546$ & $	0.00000000E+00$ & $	0.00000000E+00$ & $	0.00000000E+00$ \\

\hline

\end{tabular}
\label{table:2.5m}
\end{table}

\begin{table}[!hbp]
\caption{Comparison of $y(x)$ values of standard Lane-Emden equation, for the current method and exact values given by Horedt for m=3 \cite{-38}}

\begin{tabular}{|l|c|c|c|}
\hline
$x$ & Present method & Exact Value & Error\\
\hline
$0.00 $ & $ 1.00000000E+00$ & $	1.00000000E+00$ & $	0.00000000E+00$ \\
$0.10 $ & $ 9.98335820E-01$ & $	9.98335800E-01$ & $	2.00000000E-08$ \\
$0.50 $ & $ 9.59839060E-01$ & $	9.59839100E-01$ & $	4.00000000E-08$ \\
$1.00 $ & $ 8.55057560E-01$ & $	8.55057600E-01$ & $	4.00000000E-08$ \\
$5.00 $ & $ 1.10819830E-01$ & $	1.10819800E-01$ & $	3.00000000E-08$ \\
$6.00 $ & $ 4.37379800E-02$ & $	4.37380000E-02$ & $	2.00000000E-08$ \\
$6.80 $ & $ 4.16778000E-03$ & $	4.16780000E-03$ & $	2.00000000E-08$ \\
$6.90 $ & $-1.33650000E-04$ & $	3.60000000E-05$ & $	1.69650000E-04$ \\

\hline

\end{tabular}
\label{table:3m}
\end{table}

\begin{table}[!hbp]
\caption{Comparison of $y(x)$ values of standard Lane-Emden equation, for the current method and exact values given by Horedt for m=4 \cite{-38}}

\begin{tabular}{|l|c|c|c|}
\hline
$x$ & Present method & Exact Value & Error\\
\hline
$0.00 $ & $1.00000000E+00$ & $	1.00000000E+00$ & $	0.00000000E+00$ \\
$0.10 $ & $1.01585088E+00$ & $	9.98336700E-01$ & $	1.75141815E-02$ \\
$0.20 $ & $9.92425032E-01$ & $	9.93386200E-01$ & $	9.61167905E-04$ \\
$0.50 $ & $9.60310891E-01$ & $	9.60310900E-01$ & $	8.81922002E-09$ \\
$1.00 $ & $8.60813812E-01$ & $	8.60813800E-01$ & $	1.22079751E-08$ \\
$5.00 $ & $2.35922731E-01$ & $	2.35922700E-01$ & $	3.10685290E-08$ \\
$10.00 $ & $5.96737717E-02$ & $	5.96727400E-02$ & $	1.03172317E-06$ \\
$14.00 $ & $8.33293035E-03$ & $	8.33052700E-03$ & $	2.40334699E-06$ \\
$14.90 $ & $5.76673836E-04$ & $	5.76418900E-04$ & $	2.54935916E-07$ \\

\hline

\end{tabular}
\label{table:4m}
\end{table}

\begin{table}[!hbp]
\caption{Coefficients of the RCS functions of the standard Lane-Emden equations for $m=1.5,2,3$ and $4$}
\begin{tabular}{|l|lllll|}
\hline
$i$ & $a_i$    &     &       &     &     \\ \cline{2-6}
  & $m=1.5$ & $m=2$ & $m=2.5$ & $m=3$ & $m=4$ \\ \hline
0 &  -8.35020235E-02  &  -7.37260675E-02  &  -5.84134154E-02  &  -6.00541969E-01  &  -1.80533469E-02 \\
 1&  4.14892772E-02  &  4.61551717E-02  &  4.24877591E-02  &  1.07145455E+00  &  1.60801870E-03 \\
 2&  1.06527872E-02  &  -5.59588313E-04  &  9.68579002E-03  &  -1.38004834E+00 & 4.91151095E-02 \\
 3& -6.46059281E-03 & -2.88264289E-04 & -5.69688651E-03 & 1.57447823E+00 & -6.29427069E-02 \\
 4& -9.23628705E-03 & -2.03783735E-02 & -2.83354601E-03 & -1.61608168E+00 & 5.51425972E-02 \\
 5& -5.14378683E-03 & 2.77045133E-03 & -1.88446376E-03 & 1.49866216E+00 & -5.60724094E-02 \\
 6& -3.99197152E-04 & -1.30872723E-02 & 2.47503817E-03 & -1.28488320E+00 & 5.00091630E-02 \\
 7& 2.35669147E-03 & 6.02895464E-03 & 4.20009874E-04 & 1.01947325E+00 & -4.03638627E-02 \\
 8& 3.03971126E-03 & -8.47557449E-03 & 1.86252286E-03 & -7.46940934E-01 & 3.09682458E-02 \\
 9& 2.53242065E-03 & 4.00096435E-03 & -1.11760997E-04 & 5.05881970E-01 & -2.19137068E-02 \\
 10& 1.67607539E-03 & -4.94521907E-03 & 7.92650268E-04 & -3.15031208E-01 & 1.42286699E-02 \\
 11& 9.36473928E-04 & 1.94713984E-03 & -1.16018023E-04 & 1.79413013E-01 & -8.51801989E-03 \\
 12& 4.51996582E-04 & -2.06474187E-03 & 3.37002942E-04 & -9.27123693E-02 & 4.63985922E-03 \\
 13& 1.89757483E-04 & 7.66054296E-04 & -1.36758208E-05 & 4.30569575E-02 & -2.27214677E-03 \\
 14& 6.90455024E-05 & -5.75592272E-04 & 1.09800992E-04 & -1.76863574E-02 & 9.90326051E-04 \\
 15& 2.15040362E-05 & 2.13683933E-04 & 4.50594242E-06 & 6.32155250E-03 & -3.76662896E-04 \\
 16& 5.58921881E-06 & -9.82199992E-05 & 2.08231737E-05 & -1.90188385E-03 & 1.20837515E-04 \\
 17& 1.16162729E-06 & 3.53103253E-05 & 1.59326696E-06 & 4.63035504E-04 & -3.14970993E-05 \\
 18& 1.75835253E-07 & -7.82964769E-06 & 1.71120723E-06 & -8.26242326E-05 & 6.09225231E-06 \\
 19& 1.59423494E-08 & 2.50723737E-06 & 1.81759246E-07 & 9.25997591E-06 & -7.06745003E-07 \\ \hline

\end{tabular}
\label{table:ai}
\end{table}

\begin{figure}[!hbp]
\centerline{\includegraphics[totalheight=6cm]{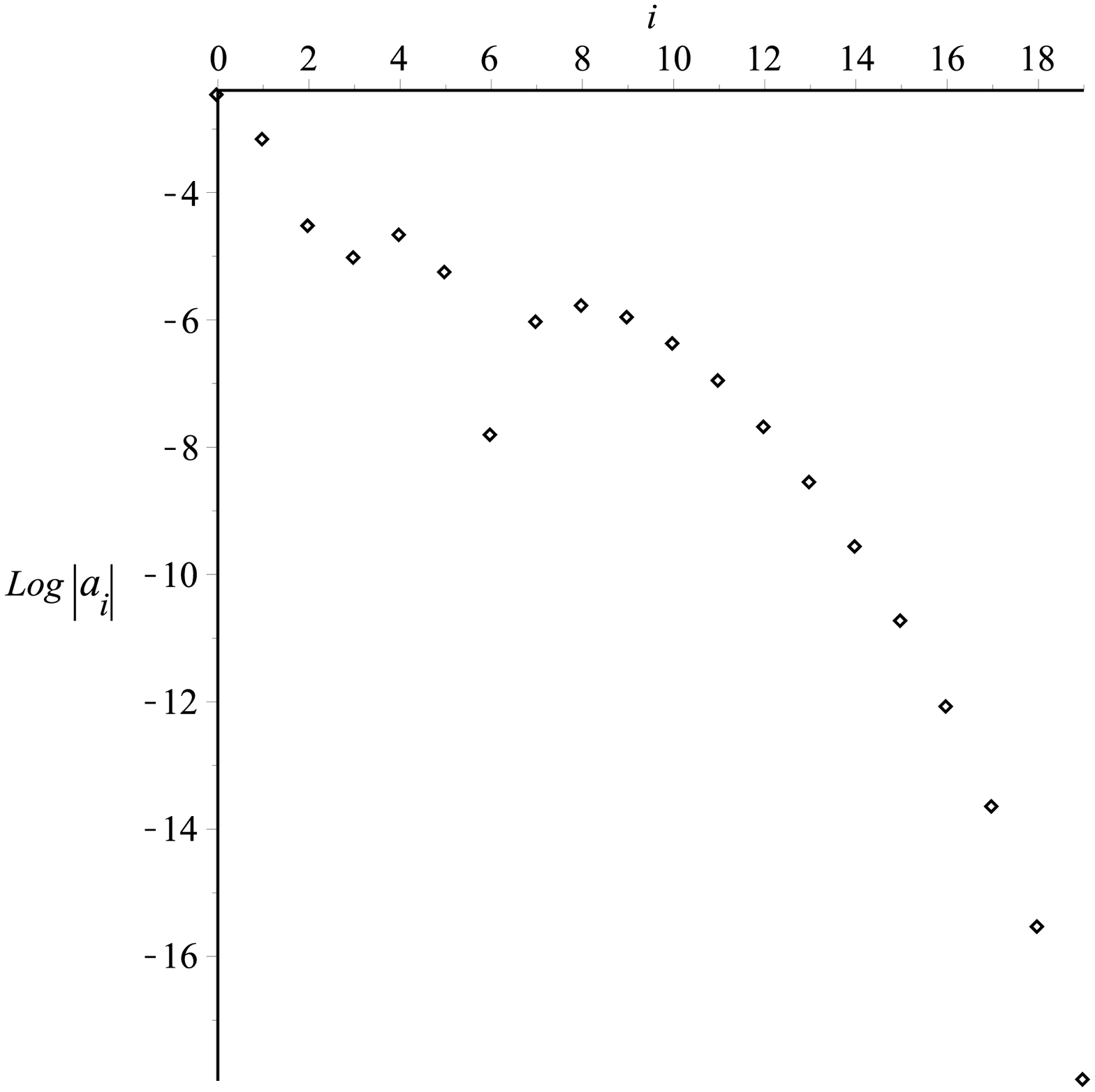}}
 \caption{Logarithmic graph of absolute coefficients $|a_i|$ of RCS function of standard
Lane-Emden equation form=1.5}

    \label{fig:ai}
\end{figure}

\begin{figure}[!htp]
\centerline{\includegraphics[scale=0.4,natwidth=610,natheight=642]{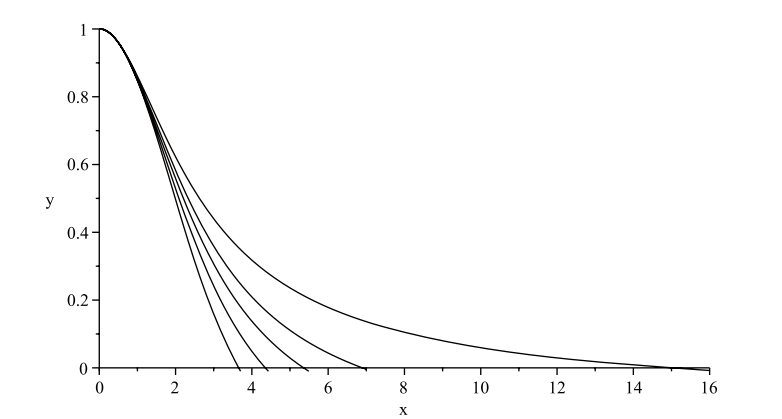}}
 \caption{Graph of standard Lane-Emden equation for $m=1.5, 2, 2.5, 3$ and $4$.}

    \label{fig:mi}
\end{figure}
\newpage
\subsection{Example 2 (The isothermal gas spheres equation)}
According to Eq.\eqref{glane}, if $f(x) = 1,g(y) = e^y,A=0$ and $B=0$ the isothermal gas spheres equation has been defined as follows:\\
\begin{equation}
y''(x) + \frac{2}{x}y'(x) + {e^y(x)} = 0,x \geqslant 0 \label{eq:isothermal}
\end{equation}
with boundary conditions\\
\begin{equation}
y(0)=0,y'(0)=0 \nonumber
\end{equation}
Davis \cite{davis 2006} has discussed about \cref{eq:isothermal} that can be used to view the isothermal gas spheres, where
the temperature remains constant.\\
A series solution have investigated by Wazwaz \cite{29},Liao \cite{33},Singh et al. \cite{49} and Ramos \cite{37} by using ADM, ADM, MHAM and series expansion, respectively:
\begin{equation}
y(x) \simeq  - \frac{1}{6}{x^2} + \frac{1}{{5.4!}}{x^4} - \frac{8}{{21.6!}}{x^6} + \frac{{122}}{{81.8!}}{x^8} - \frac{{61.67}}{{459.10!}}{x^{10}}.
\end{equation}
We have applied Chebyshev of Second kind Collocation method to solve Eq. \eqref{eq:isothermal} therefore we have constructed the residual function:
\begin{equation}
\operatorname{Res} (x) = \frac{{{d^2}}}{{d{x^2}}}{{\tilde \xi }_N}y(x) + \frac{2}{x}\frac{d}{{dx}}{{\tilde \xi }_N}y(x) + { e^{{{\tilde \xi }_N}y(x)}}.
\end{equation}
As it was mentioned, $N+1$ points with equal distance from each other in $[0,q]$ have been substituted in $Res(x)$:\\
\begin{equation}
Res(x_j) = 0 ,j=0,1,2,3,\ldots,N \nonumber
\end{equation}
A non-linear system of equations has been obtained. Some numerical methods should be used as Newton's method. The Maple version 17 has been used to solve the non-linear system of equations. Solving these types of equations system, Maple software has been used the Newton's method and advanced algorithm. The Maple $fsolve$ has been used for all of unknown coefficients with an initial zero value.
The unknown coefficients $a_i$ will be obtained and approximation of ${{\tilde \xi }_N} y(x)$ is the result.

\cref{table:isothermal} has shown the comparison of $y(x)$ obtained by the method
proposed in this paper with $N=40$ and those obtained by Wazwaz \cite{29}. As ADM is based on Taylor series ,it can be contributed that there is a very high level of accuracy around zero point. This accuracy will decrease while the distance of points starts to raise from zero point.
The resulting graph of the isothermal gas spheres equation
in comparison to the presented method and those obtained by
Wazwaz\cite{29} has been shown in \cref{fig:isothermal:waz}.
The logarithmic graph of the absolute coefficients of Chebyshev of Second kind functions of the standard isothermal
gas spheres is shown in \cref{fig:isothermal:ai}. This graph shows that the new
method has a proper convergence rate.

\begin{table}[!hbp]
\caption{Comparison of $y(x)$, between current method and series solution given by Wazwaz
\cite{29} for isothermal gas sphere equation.}

\begin{tabular}{|l|c|c|c|}
\hline
$x$ & Present method & Wazwaz & Error\\
\hline
$0.00 $ & $  0.00000000E+00 $ & $ 0.00000000E+00 $ & $ 0.00000000E+00 $\\
$0.10 $ & $ -1.66583386E-03 $ & $ -1.66583390E-03 $ & $ 3.79394198E-11 $\\
$0.20 $ & $ -6.65336710E-03 $ & $ -6.65336710E-03 $ & $ 4.23139822E-13 $\\
$0.50 $ & $ -4.11539573E-02 $ & $ -4.11539568E-02 $ & $ 4.92713800E-10 $\\
$1.00 $ & $ -1.58827678E-01 $ & $ -1.58827354E-01 $ & $ 3.23824394E-07 $\\
$1.50 $ & $ -3.38019425E-01 $ & $ -3.38013110E-01 $ & $ 6.31446080E-06 $\\
$2.00 $ & $ -5.59823004E-01 $ & $ -5.59962660E-01 $ & $ 1.39655764E-04 $\\
$2.50 $ & $ -8.06340871E-01 $ & $ -8.10019671E-01 $ & $ 3.67880071E-03 $\\

\hline

\end{tabular}
\label{table:isothermal}
\end{table}

\begin{figure}[!hbp]
\centerline{\includegraphics[totalheight=6cm]{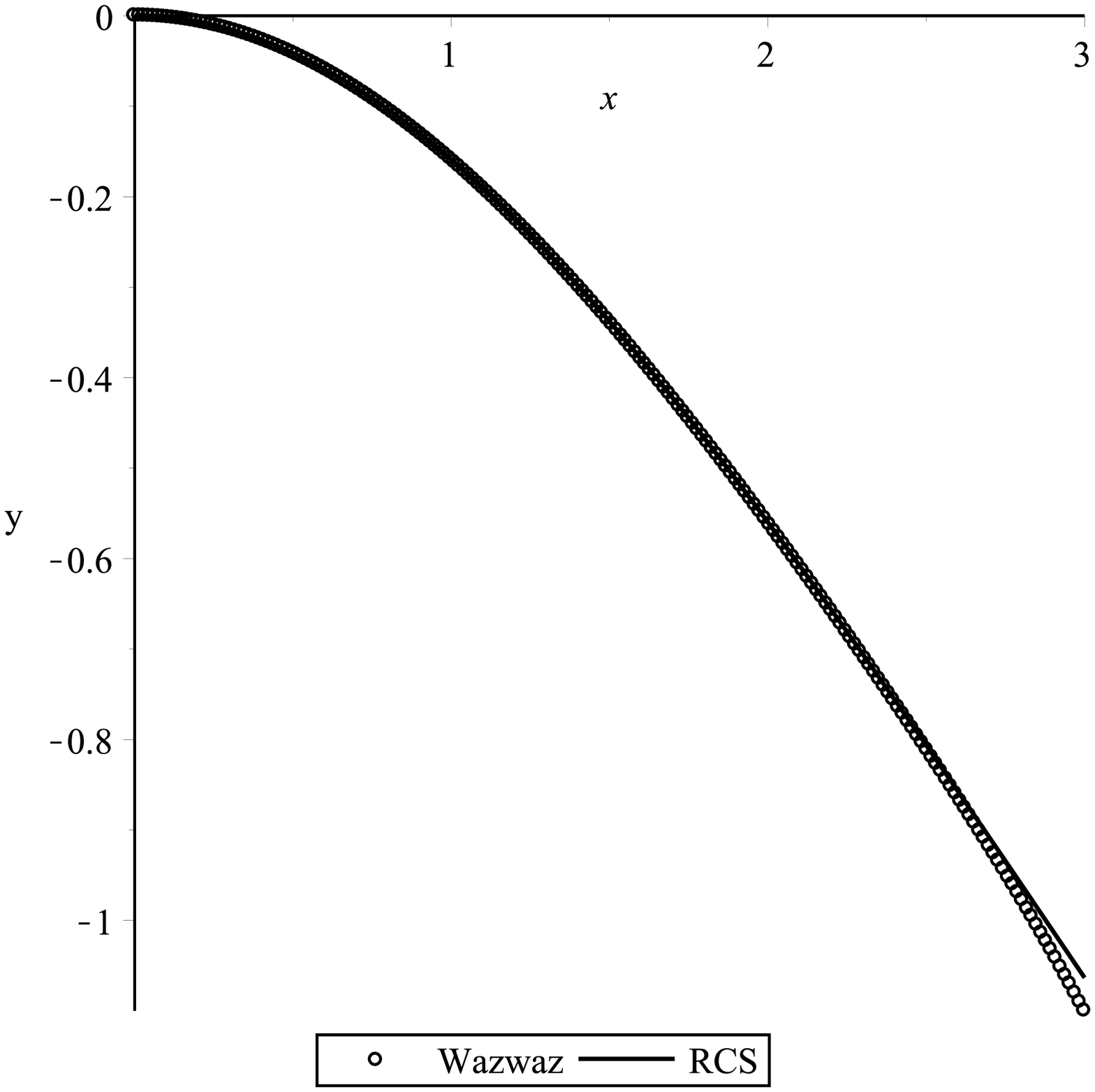}}
 \caption{Graph of isothermal gas sphere equation in comparison with Wazwaz solution \cite{29}}

    \label{fig:isothermal:waz}
\end{figure}

\begin{figure}
\centerline{\includegraphics[totalheight=6cm]{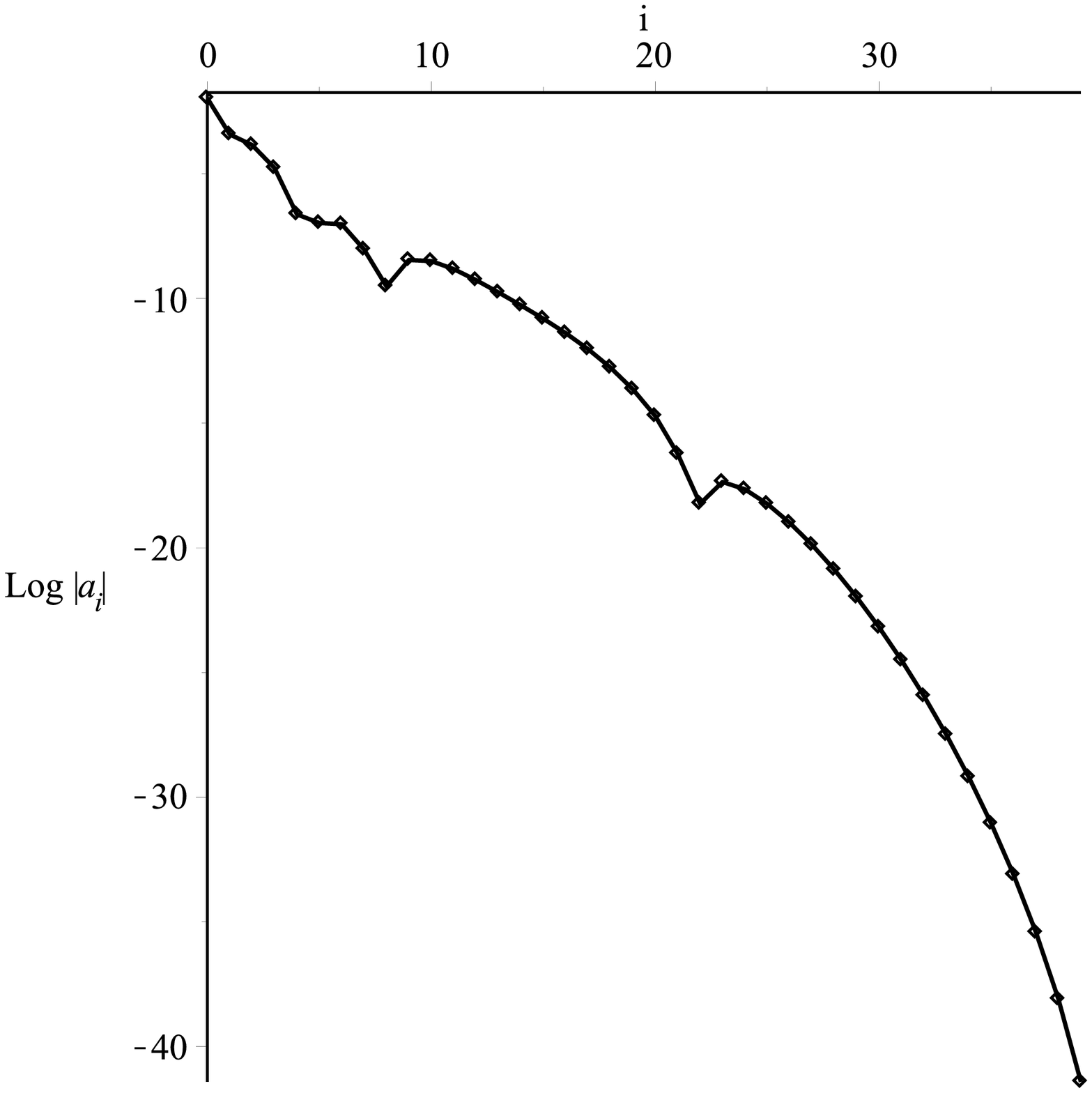}}
 \caption{Logarithmic graph of absolute coefficients $|ai|$ of RCS function of isothermal gas sphere equation.}

    \label{fig:isothermal:ai}
\end{figure}
\newpage
\subsection{Example 3}
According to Eq.\eqref{glane}, if $f(x) = 1,g(y) = \sinh (y),A=1$ and $B=0$ the equation has been defined as follows:\\
\begin{equation}
y''(x) + \frac{2}{x}y'(x) + {\sinh (y)} = 0,x \geqslant 0 \label{eq:sinh}
\end{equation}
with boundary conditions
\begin{equation}
y(0)=1,y'(0)=0\nonumber
\end{equation}

A series solution have investigated by Wazwaz \cite{29} by using Adomian Decomposition Method (ADM) is:
\begin{equation}
y(x) \simeq 1 - \frac{{({e^2} - 1){x^2}}}{{12e}} + \frac{1}{{480}}\frac{{({e^4} - 1){x^4}}}{{{e^2}}} - \frac{1}{{30240}}\frac{{(2{e^6} + 3{e^2} - 3{e^4} - 2){x^6}}}{{{e^3}}} + \frac{1}{{26127360}}\frac{{(61{e^8} + 104{e^6} - 104{e^2} - 61){x^8}}}{{{e^4}}}
\end{equation}
we have purposed to RCS method to solve \cref{eq:sinh} therefore, we construct the residual function as follows:
\begin{equation}
\operatorname{Res} (x) = \frac{{{d^2}}}{{d{x^2}}}{{\tilde \xi }_N}y(x) + \frac{2}{x}\frac{d}{{dx}}{{\tilde \xi }_N}y(x) + {\sinh ({{{\tilde \xi }_N}y(x)})}.
\end{equation}\\

$N+1$ points with equal distance from each other in $[0,q]$ have been substituted in $Res(x)$:\\
\begin{equation}
Res(x_j) = 0 ,j=0,1,2,3,\ldots,N \nonumber
\end{equation}
A non-linear system of equations has been obtained. Some numerical methods should be used as Newton's method. The Maple version 17 has been used to solve the non-linear system of equations. Solving these types of equations system, Maple software has been used the Newton's method and advanced algorithm. The Maple $fsolve$ has been used for all of unknown coefficients with an initial zero value.
The unknown coefficients $a_i$ will be obtained and approximation of ${{\tilde \xi }_N} y(x)$ is the result.

\cref{table:exmp3} has shown the comparison of $y(x)$ obtained by the new
method proposed in this paper with N=20, and
those obtained by Wazwaz \cite{29}.The resulting graph of \cref{eq:sinh}
in comparison to the presented method and those obtained by
Wazwaz \cite{29} are shown in \cref{fig:exmp3:waz}.

\begin{table}[!htp]
\caption{Comparison of $y(x)$, between current method and series solution given by Wazwaz
\cite{29} for Example 3.}

\begin{tabular}{|l|c|c|c|}
\hline
$x$ & Present method & Wazwaz & Error\\
\hline
$0.00$ & $0.00000000E+00$ & $0.00000000E+00$ & $0.00000000E+00$\\
$0.10$ & $9.98042842E-01$ & $9.98042841E-01$ & $9.80743042E-10$\\
$0.20$ & $9.92189436E-01$ & $9.92189435E-01$ & $1.02811104E-09$\\
$0.50$ & $9.51961094E-01$ & $9.51961102E-01$ & $8.14090706E-09$\\
$1.00$ & $8.18242929E-01$ & $8.18251667E-01$ & $8.73755808E-06$\\
$1.50$ & $6.25438765E-01$ & $6.25891608E-01$ & $4.52843155E-04$\\
$2.00$ & $4.06623301E-01$ & $4.13669104E-01$ & $7.04580312E-03$\\

\hline

\end{tabular}
\label{table:exmp3}
\end{table}

\begin{figure}[!htp]
\centerline{\includegraphics[totalheight=6cm]{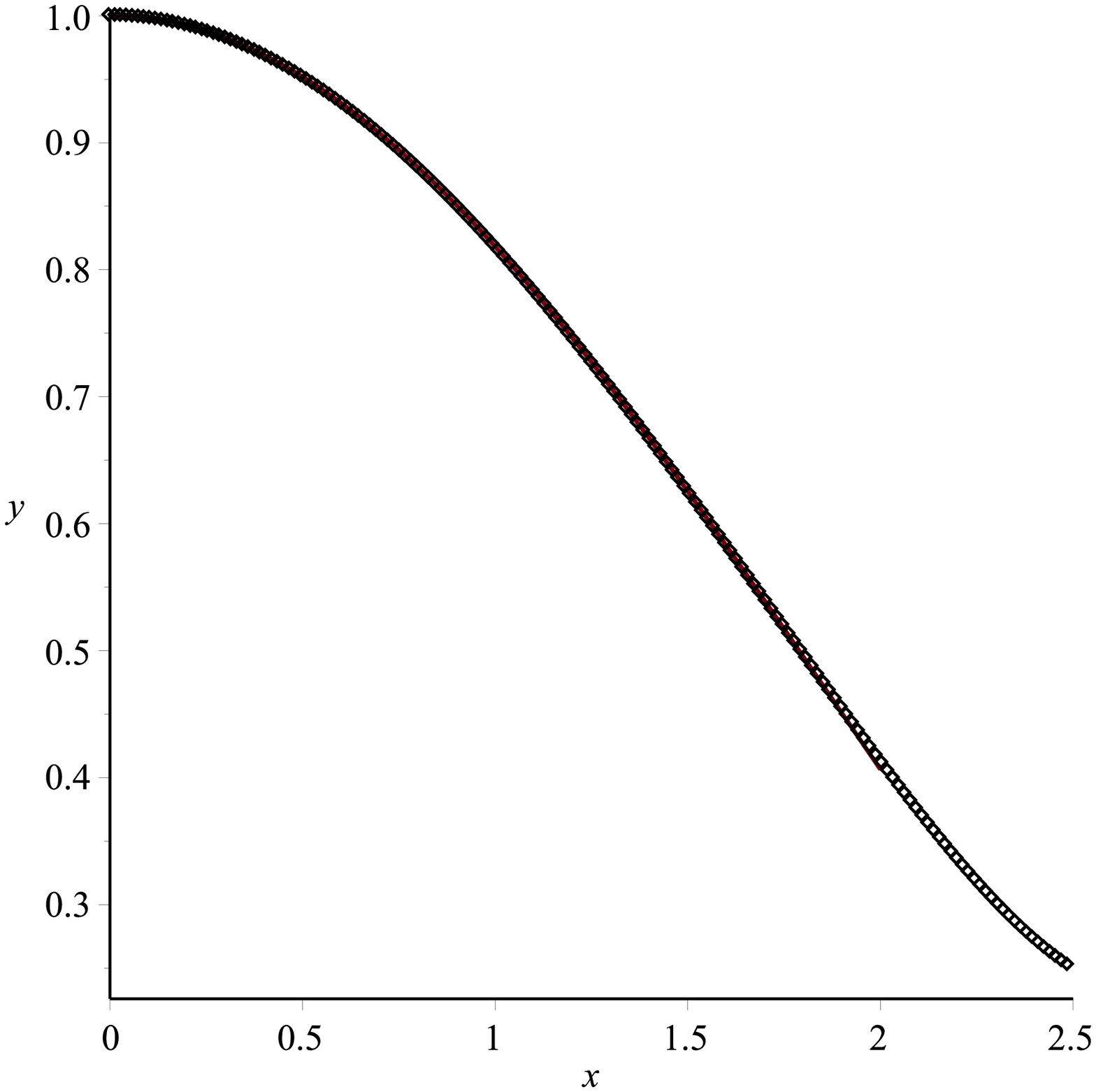}}
 \caption{Graph of equation of Example 3 in comparing the presented method and Wazwaz solution \cite{29}}

    \label{fig:exmp3:waz}
\end{figure}
\newpage
\subsection{Example 4}
According to Eq.\eqref{glane}, if $f(x) = 1,g(y) = \sin (y),A=1$ and $B=0$ the equation has been defined as follows:\\
\begin{equation}
y''(x) + \frac{2}{x}y'(x) + {\sin (y)} = 0,x \geqslant 0 \label{eq:sin}
\end{equation}
with boundary conditions
\begin{equation}
y(0)=1,y'(0)=0\nonumber
\end{equation}

A series solution have investigated by Wazwaz \cite{29} by using Adomian Decomposition Method (ADM) is:
\begin{align}
y(x) &\simeq 1 - \frac{1}{6}{k_1}{x^2} + \frac{1}{{120}}{k_1}{k_2}{x^4} + {k_1}\left( {\frac{1}{{3024}}k_1^2 - \frac{1}{{5040}}k_2^2} \right){x^6} + {k_1}{k_2}\left( { - \frac{{113}}{{3265920}}k_1^2 + \frac{1}{{362880}}k_2^2} \right){x^8} \nonumber \\
& + {k_1}\left( {\frac{{1781}}{{82892800}}k_1^2k_2^2 - \frac{1}{{399168000}}k_2^4 - \frac{{19}}{{23950080}}k_1^4} \right){x^{10}}
\end{align}
where $k_1=\sin (1)$ and $k_2=\cos (1)$
we have purposed to RCS method to solve \cref{eq:sin} therefore, we construct the residual function as follows:
\begin{equation}
\operatorname{Res} (x) = \frac{{{d^2}}}{{d{x^2}}}{{\tilde \xi }_N}y(x) + \frac{2}{x}\frac{d}{{dx}}{{\tilde \xi }_N}y(x) + {\sin ({{{\tilde \xi }_N}y(x)})}.
\end{equation}\\

As it was mentioned, $N+1$ points with equal distance from each other in $[0,q]$ have been substituted in $Res(x)$:\\
\begin{equation}
Res(x_j) = 0 ,j=0,1,2,3,\ldots,N \nonumber
\end{equation}
A non-linear system of equations has been obtained. Some numerical methods should be used as Newton's method. The Maple version 17 has been used to solve the non-linear system of equations. Solving these types of equations system, Maple software has been used the Newton's method and advanced algorithm. The Maple $fsolve$ has been used for all of unknown coefficients with an initial zero value.
The unknown coefficients $a_i$ will be obtained and approximation of ${{\tilde \xi }_N} y(x)$ is the result.

\cref{table:exmp4} has shown the comparison of $y(x)$ obtained by the new
method proposed in this paper with N=20, and
those obtained by Wazwaz \cite{29}.The resulting graph of \cref{eq:sin}
in comparison to the current method and those obtained by
Wazwaz \cite{29} are shown in \cref{fig:exmp4:waz}.

\begin{table}[!htp]
\caption{Comparison of $y(x)$, between current method and series solution given by Wazwaz
\cite{29} for Example 4.}

\begin{tabular}{|l|c|c|c|}
\hline
$x$ & Present method & Wazwaz & Error\\
\hline
$0.00$ & $1.00000000E+00$ & $1.00000000E+00$ & $0.00000000E+00$\\
$0.10$ & $9.98597930E-01$ & $9.98597936E-01$ & $5.40042000E-09$\\
$0.20$ & $9.94396268E-01$ & $9.94396273E-01$ & $5.12808607E-09$\\
$0.50$ & $9.65177784E-01$ & $9.65177789E-01$ & $5.03923003E-09$\\
$1.00$ & $8.63681129E-01$ & $8.63681103E-01$ & $2.60689941E-08$\\
$1.50$ & $7.05045237E-01$ & $7.05041925E-01$ & $3.31264027E-06$\\
$2.00$ & $5.06464502E-01$ & $5.06372033E-01$ & $9.24689456E-05$\\

\hline

\end{tabular}
\label{table:exmp4}
\end{table}

\begin{figure}[!htp]
\centerline{\includegraphics[totalheight=6cm]{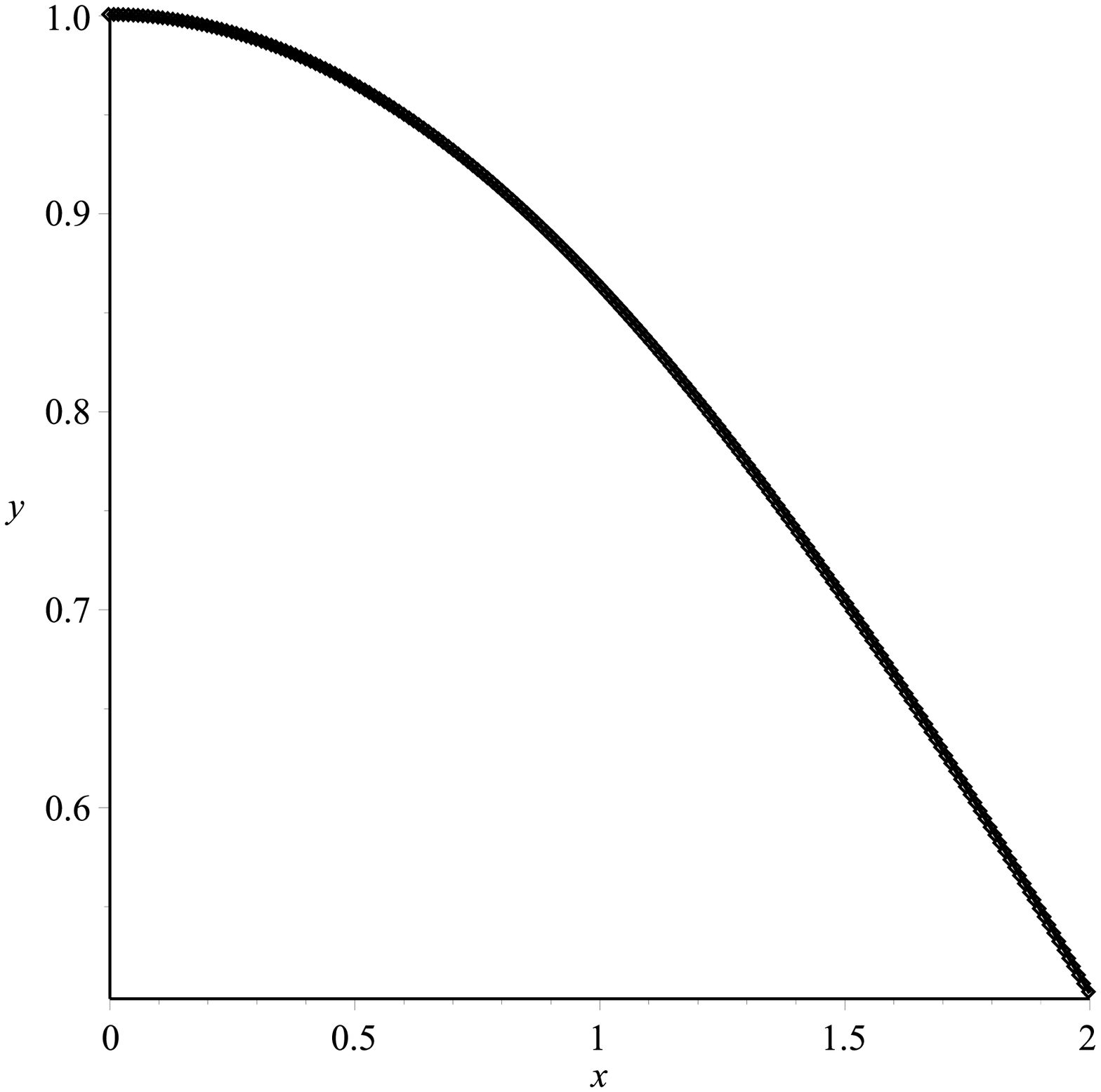}}
 \caption{Graph of equation of Example 4 in comparing the current method and Wazwaz solution \cite{29}}

    \label{fig:exmp4:waz}
\end{figure}
\newpage
\subsubsection{Example 5}
According to Eq.\eqref{glane}, if $f(x) = 1, g(y) = 4(2{e^y} + {e^{\frac{y}{2}}}),A=0$ and $B=0$ the equation has been defined as follows:\\
\begin{equation}
y''(x) + \frac{2}{x}y'(x) + {4(2{e^y} + {e^{\frac{y}{2}}})} = 0,x \geqslant 0 \label{eq:exmp5}
\end{equation}
with boundary conditions
\begin{equation}
y(0)=0,y'(0)=0\nonumber
\end{equation}

which has the following analytical solution:
\begin{equation}
y(x)=-2\ln(1+x^2). \label{eq:exmp5:anal}
\end{equation}

We have purposed to RCS method to solve \cref{eq:exmp5} therefore, we construct the residual function as follows:
\begin{equation}
\operatorname{Res} (x) = \frac{{{d^2}}}{{d{x^2}}}{{\tilde \xi }_N}y(x) + \frac{2}{x}\frac{d}{{dx}}{{\tilde \xi }_N}y(x) +
 {4(2{e^{{{\tilde \xi }_N}y(x)}} + {e^{\frac{{{{\tilde \xi }_N}y(x)}}{2}}})}.
\end{equation}\\

$N+1$ points with equal distance from each other in $[0,q]$ have been substituted in $Res(x)$:\\
\begin{equation}
Res(x_j) = 0 ,j=0,1,2,3,\ldots,N \nonumber
\end{equation}
A non-linear system of equations has been obtained. Some numerical methods should be used as Newton's method. The Maple version 17 has been used to solve the non-linear system of equations. Solving these types of equations system, Maple software has been used the Newton's method and advanced algorithm. The Maple $fsolve$ has been used for all of unknown coefficients with an initial zero value.
The unknown coefficients $a_i$ will be obtained and approximation of ${{\tilde \xi }_N} y(x)$ is the result.

\cref{table:exmp5} has shown the comparison of $y(x)$ obtained by the new
method proposed in this paper with N=46, and
the analytic solution\cref{eq:exmp5:anal} .The resulting graph of \cref{eq:exmp5}
in comparison to the presented method and
the analytic solution\cref{eq:exmp5:anal} are shown in \cref{fig:exmp5:waz}.

\begin{table}[!htp]
\caption{Comparison of $y(x)$, between present method and exact solution for Example 5.}

\begin{tabular}{|l|c|c|c|}
\hline
$x$ & Present method & Exact value & Error\\
\hline
$0.00$ & $0.00000000E+00$ & $0.00000000E+00$ & $0.00000000E+00$\\
$0.01$ & $-1.99985444E-04$ & $-1.99990000E-04$ & $4.55642163E-09$\\
$0.10$ & $-1.99006434E-02$ & $-1.99006617E-02$ & $1.83196007E-08$\\
$0.50$ & $-4.46287089E-01$ & $-4.46287103E-01$ & $1.39323760E-08$\\
$1.00$ & $-1.38629436E+00$ & $-1.38629436E+00$ & $4.32735003E-09$\\
$2.00$ & $-3.21887583E+00$ & $-3.21887582E+00$ & $3.97273014E-09$\\
$3.00$ & $-4.60517019E+00$ & $-4.60517019E+00$ & $5.26229016E-09$\\
$4.00$ & $-5.66642669E+00$ & $-5.66642669E+00$ & $4.81089035E-09$\\
$5.00$ & $-6.51619308E+00$ & $-6.51619308E+00$ & $3.89665988E-09$\\
$6.00$ & $-7.22183583E+00$ & $-7.22183583E+00$ & $1.57089008E-09$\\
$7.00$ & $-7.82404602E+00$ & $-7.82404601E+00$ & $6.89516000E-09$\\
$8.00$ & $-8.34877473E+00$ & $-8.34877454E+00$ & $1.88648450E-07$\\
$9.00$ & $-8.81343999E+00$ & $-8.81343849E+00$ & $1.49618078E-06$\\
$10.00$ & $-9.23024811E+00$ & $-9.23024103E+00$ & $7.08015080E-06$\\

\hline

\end{tabular}
\label{table:exmp5}
\end{table}

\begin{figure}[!htp]
\centerline{\includegraphics[totalheight=6cm]{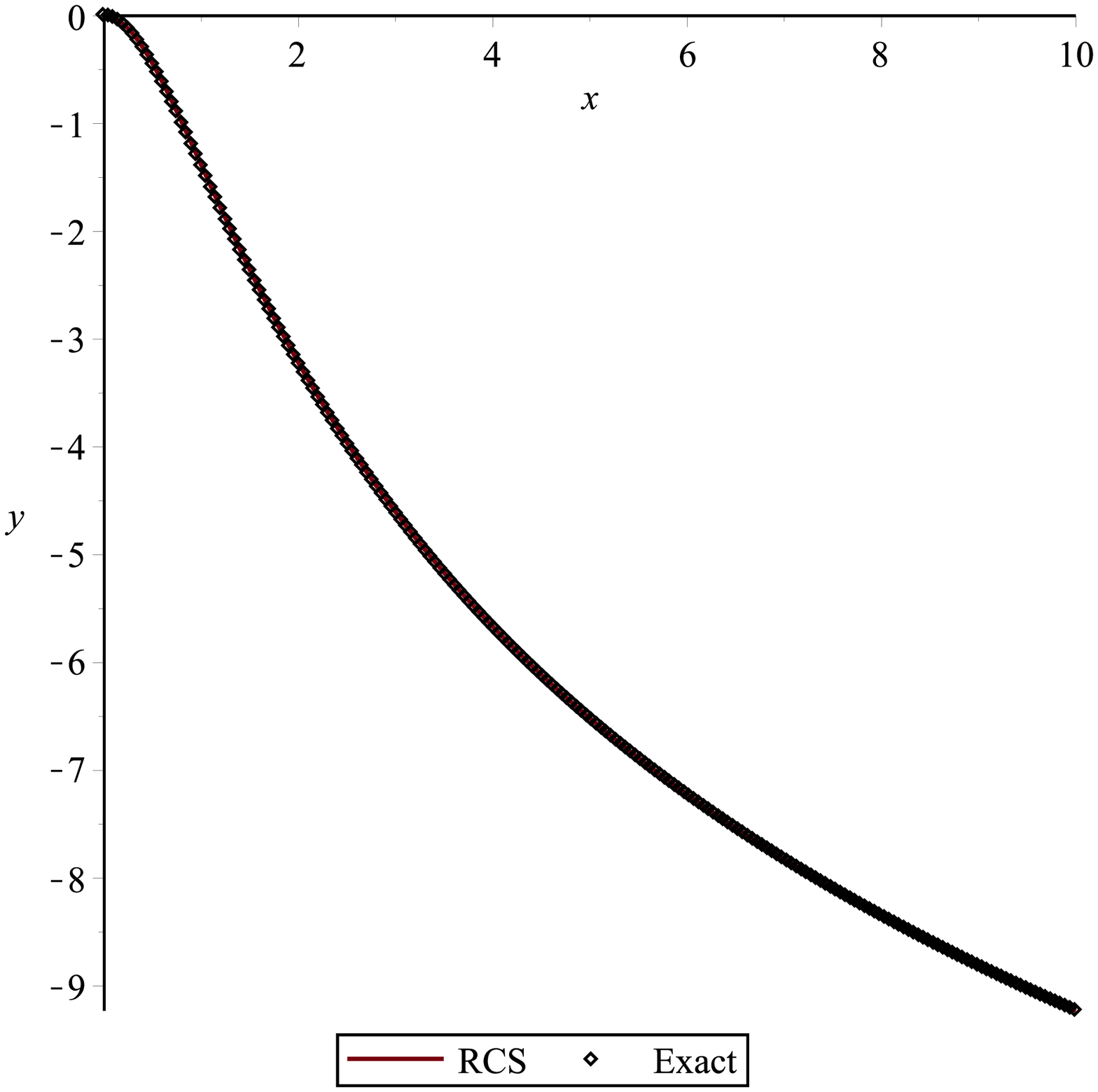}}
 \caption{Graph of equation of Example 5 in comparing the presented method and
analytic solution.}

    \label{fig:exmp5:waz}
\end{figure}
\newpage
\subsubsection{Example 6}
If $f(x) = 1, g(y) = -6y-4y\ln (y),A=0$ and $B=1$, one of the Lane-Emden type equations will obtain as follows:\\
\begin{equation}
y''(x) + \frac{2}{x}y'(x)-6y-4y\ln (y) = 0,x \geqslant 0 \label{eq:exmp6}
\end{equation}
with boundary conditions
\begin{equation}
y(0)=1,y'(0)=0\nonumber
\end{equation}

which has the following analytical solution:
\begin{equation}
y(x)=e^{x^2}. \label{eq:exmp6:anal}
\end{equation}

We have purposed to RCS method to solve \cref{eq:exmp6} therefore, we construct the residual function as follows:
\begin{equation}
\operatorname{Res} (x) = \frac{{{d^2}}}{{d{x^2}}}{{\tilde \xi }_N}y(x) + \frac{2}{x}\frac{d}{{dx}}{{\tilde \xi }_N}y(x) -
 {6{{{\tilde \xi }_N}y(x)}} - {4{{{\tilde \xi }_N}y(x)}\ln ({{{\tilde \xi }_N}y(x)})} .
\end{equation}\\

$N+1$ points with equal distance from each other in $[0,q]$ have been substituted in $Res(x)$:\\
\begin{equation}
Res(x_j) = 0 ,j=0,1,2,3,\ldots,N \nonumber
\end{equation}
A non-linear system of equations has been obtained. Some numerical methods should be used as Newton's method. The Maple version 17 has been used to solve the non-linear system of equations. Solving these types of equations system, Maple software has been used the Newton's method and advanced algorithm. The Maple $fsolve$ has been used for all of unknown coefficients with an initial zero value.
The unknown coefficients $a_i$ will be obtained and approximation of ${{\tilde \xi }_N} y(x)$ is the result.

\cref{table:exmp6} has shown the comparison of $y(x)$ obtained by the new
method proposed in this paper with N=40, and
the analytic solution\cref{eq:exmp6:anal} .The resulting graph of \cref{eq:exmp6}
in comparison to the presented method and
the analytic solution\cref{eq:exmp6:anal} are shown in \cref{fig:exmp6:waz}.

The logarithmic graph of the absolute coefficients of Chebyshev of Second kind functions of \cref{eq:exmp6} is shown in \cref{fig:exmp6:ai}. This graph shows that the new method has a proper convergence rate.

\begin{table}[!htp]
\caption{Comparison of $y(x)$, between present method and exact solution for Example 6.}

\begin{tabular}{|l|c|c|c|}
\hline
$x$ & Present method & Exact value & Error\\
\hline
$0.00$ & $1.00000000E+00$ & $1.00000000E+00$ & $0.00000000E+00$\\
$0.01$ & $1.00010001E+00$ & $1.00010001E+00$ & $5.02500264E-11$\\
$0.02$ & $1.00040008E+00$ & $1.00040008E+00$ & $1.58560054E-10$\\
$0.05$ & $1.00250313E+00$ & $1.00250313E+00$ & $2.54499977E-10$\\
$0.10$ & $1.01005017E+00$ & $1.01005017E+00$ & $2.65950151E-10$\\
$0.20$ & $1.04081077E+00$ & $1.04081077E+00$ & $3.03500114E-10$\\
$0.50$ & $1.28402542E+00$ & $1.28402542E+00$ & $4.30919966E-10$\\
$0.70$ & $1.63231622E+00$ & $1.63231622E+00$ & $5.87929927E-10$\\
$0.80$ & $1.89648088E+00$ & $1.89648088E+00$ & $7.86489984E-10$\\
$0.90$ & $2.24790799E+00$ & $2.24790799E+00$ & $9.62370184E-10$\\
$1.00$ & $2.71828183E+00$ & $2.71828183E+00$ & $1.22766020E-09$\\

\hline

\end{tabular}
\label{table:exmp6}
\end{table}

\begin{figure}[!htp]
\centerline{\includegraphics[totalheight=6cm]{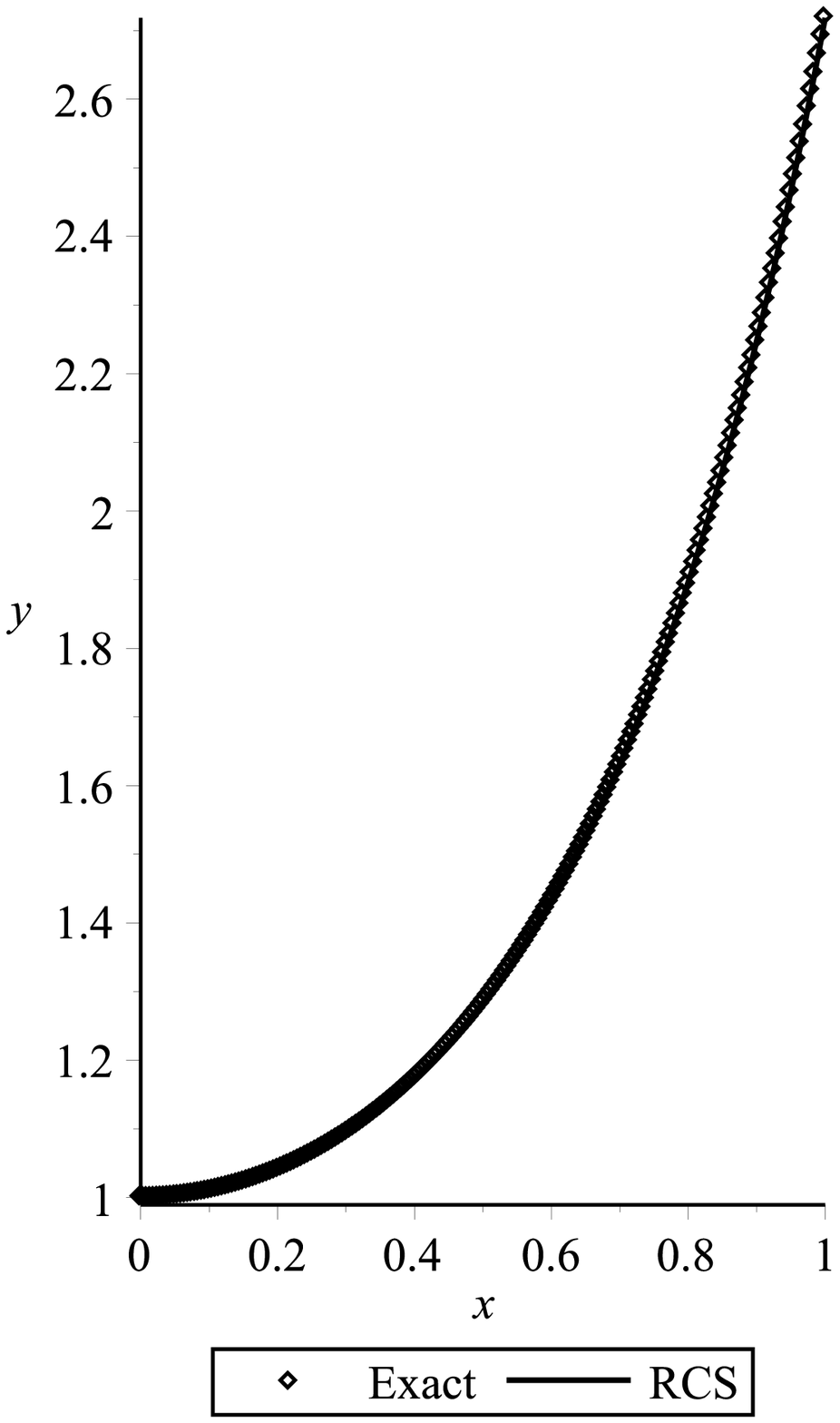}}
 \caption{Graph of equation of Example 6 in comparing the current method and
analytic solution.}

    \label{fig:exmp6:waz}
\end{figure}

\begin{figure}[!htp]
\centerline{\includegraphics[totalheight=6cm]{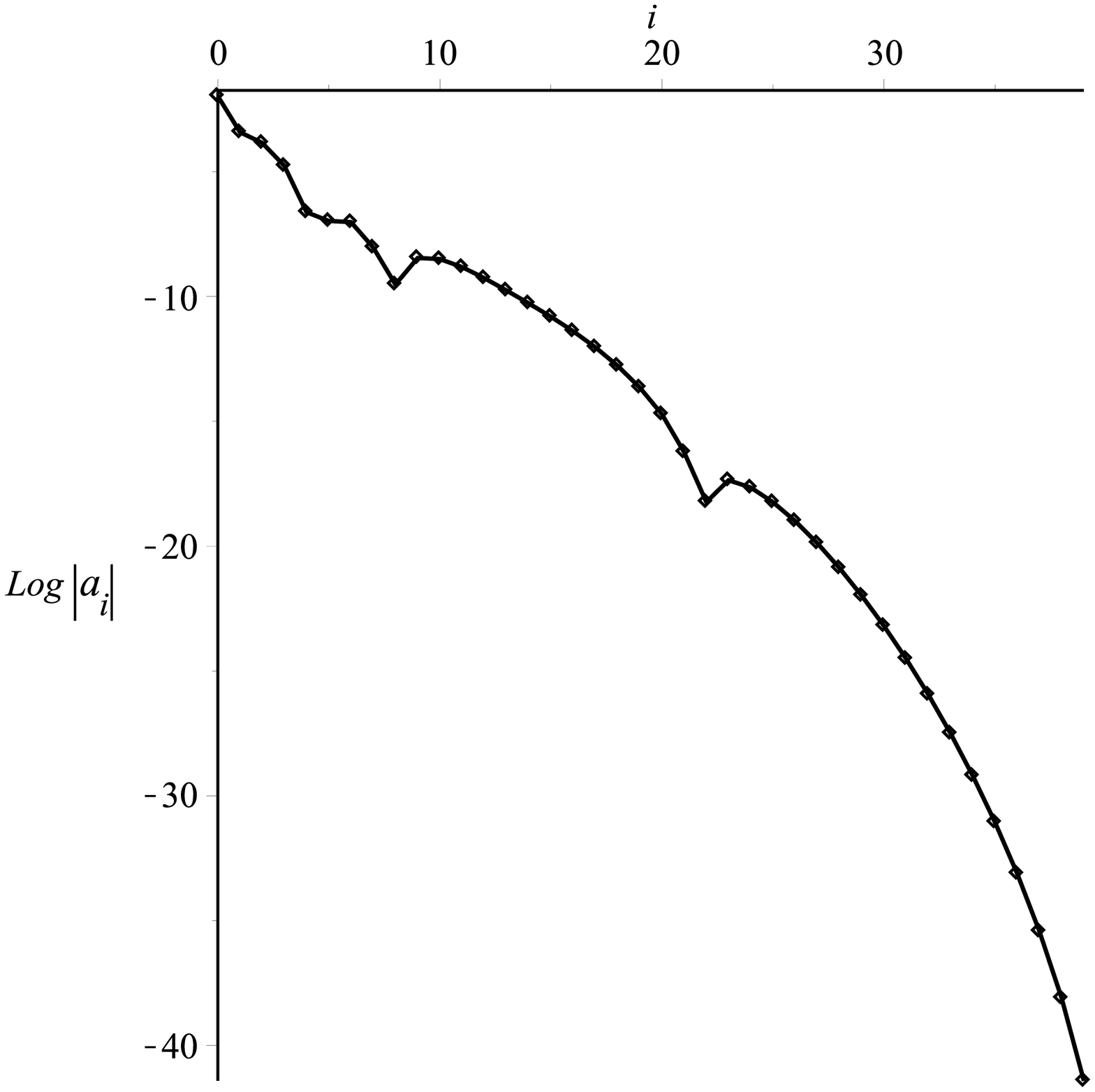}}
 \caption{Logarithmic graph of absolute coefficients $|ai|$ of RCS function of Example 6}

    \label{fig:exmp6:ai}
\end{figure}
\newpage
\subsubsection{Example 7}
If $f(x) = -2(2x^2+3), g(y) = y,A=1$ and $B=0$, one of the Lane-Emden type equations will obtain as follows:\\
\begin{equation}
y''(x) + \frac{2}{x}y'(x)-2(2x^2+3)y = 0,x \geqslant 0 \label{eq:exmp7}
\end{equation}
with boundary conditions
\begin{equation}
y(0)=1,y'(0)=0\nonumber
\end{equation}

which has the following analytical solution:
\begin{equation}
y(x)=e^{x^2}. \label{eq:exmp7:anal}
\end{equation}

We have purposed to RCS method to solve \cref{eq:exmp6} therefore, we construct the residual function as follows:
\begin{equation}
\operatorname{Res} (x) = \frac{{{d^2}}}{{d{x^2}}}{{\tilde \xi }_N}y(x) + \frac{2}{x}\frac{d}{{dx}}{{\tilde \xi }_N}y(x) -
 2(2x^2+3){{{\tilde \xi }_N}y(x)}.
\end{equation}\\

$N+1$ points with equal distance from each other in $[0,q]$ have been substituted in $Res(x)$:\\
\begin{equation}
Res(x_j) = 0 ,j=0,1,2,3,\ldots,N \nonumber
\end{equation}
A non-linear system of equations has been obtained. Some numerical methods should be used as Newton's method. The Maple version 17 has been used to solve the non-linear system of equations. Solving these types of equations system, Maple software has been used the Newton's method and advanced algorithm. The Maple $fsolve$ has been used for all of unknown coefficients with an initial zero value.
The unknown coefficients $a_i$ will be obtained and approximation of ${{\tilde \xi }_N} y(x)$ is the result.

\cref{table:exmp7} has shown the comparison of $y(x)$ obtained by the new
method proposed in this paper with N=40, and
the analytic solution\cref{eq:exmp7:anal} .The resulting graph of \cref{eq:exmp7}
in comparison to the presented method and
the analytic solution\cref{eq:exmp7:anal} are shown in \cref{fig:exmp7:waz}.

The logarithmic graph of the absolute coefficients of Rational Chebyshev of Second kind functions of \cref{eq:exmp7} is shown in \cref{fig:exmp7:ai}. This graph shows that the new method has a proper convergence rate.

\begin{table}[!htp]
\caption{Comparison of $y(x)$, between current method and exact solution for Example 7.}

\begin{tabular}{|l|c|c|c|}
\hline
$x$ & Present method & Exact value & Error\\
\hline
$0.00$ & $1.00000000E+00$ & $1.00000000E+00$ & $0.00000000E+00$\\
$0.01$ & $1.00010001E+00$ & $1.00010001E+00$ & $4.58300065E-11$\\
$0.02$ & $1.00040008E+00$ & $1.00040008E+00$ & $1.07270193E-10$\\
$0.05$ & $1.00250313E+00$ & $1.00250313E+00$ & $1.38300038E-10$\\
$0.10$ & $1.01005017E+00$ & $1.01005017E+00$ & $1.27110100E-10$\\
$0.20$ & $1.04081077E+00$ & $1.04081077E+00$ & $1.44440016E-10$\\
$0.50$ & $1.28402542E+00$ & $1.28402542E+00$ & $1.78340009E-10$\\
$0.70$ & $1.63231622E+00$ & $1.63231622E+00$ & $1.98119965E-10$\\
$0.80$ & $1.89648088E+00$ & $1.89648088E+00$ & $2.87079915E-10$\\
$0.90$ & $2.24790799E+00$ & $2.24790799E+00$ & $3.10949932E-10$\\
$1.00$ & $2.71828183E+00$ & $2.71828183E+00$ & $3.63560293E-10$\\

\hline

\end{tabular}
\label{table:exmp7}
\end{table}

\begin{figure}[!htp]
\centerline{\includegraphics[totalheight=6cm]{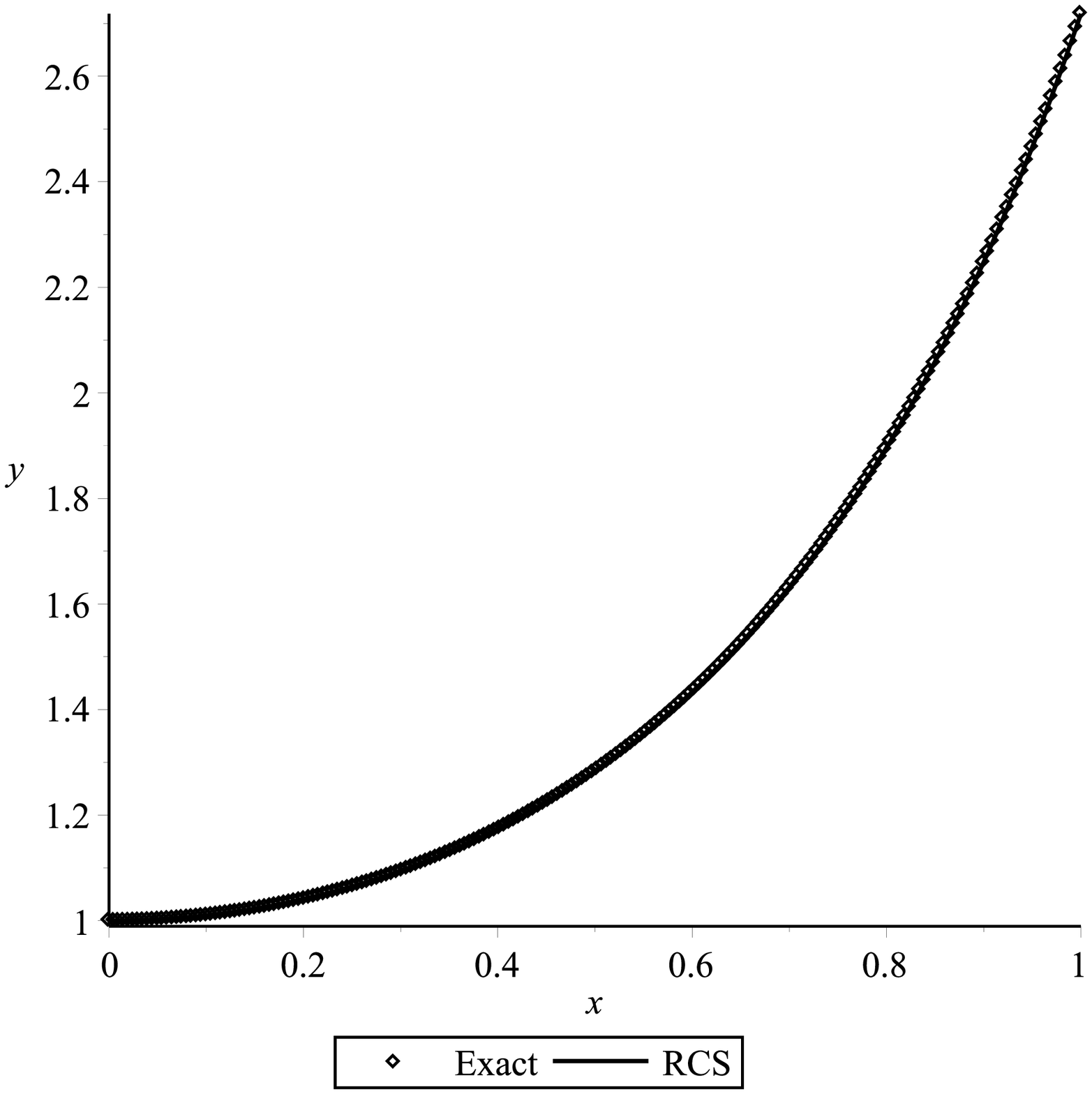}}
 \caption{Graph of equation of Example 7 in comparing the presented method and
analytic solution.}

    \label{fig:exmp7:waz}
\end{figure}

\begin{figure}[!htp]
\centerline{\includegraphics[totalheight=6cm]{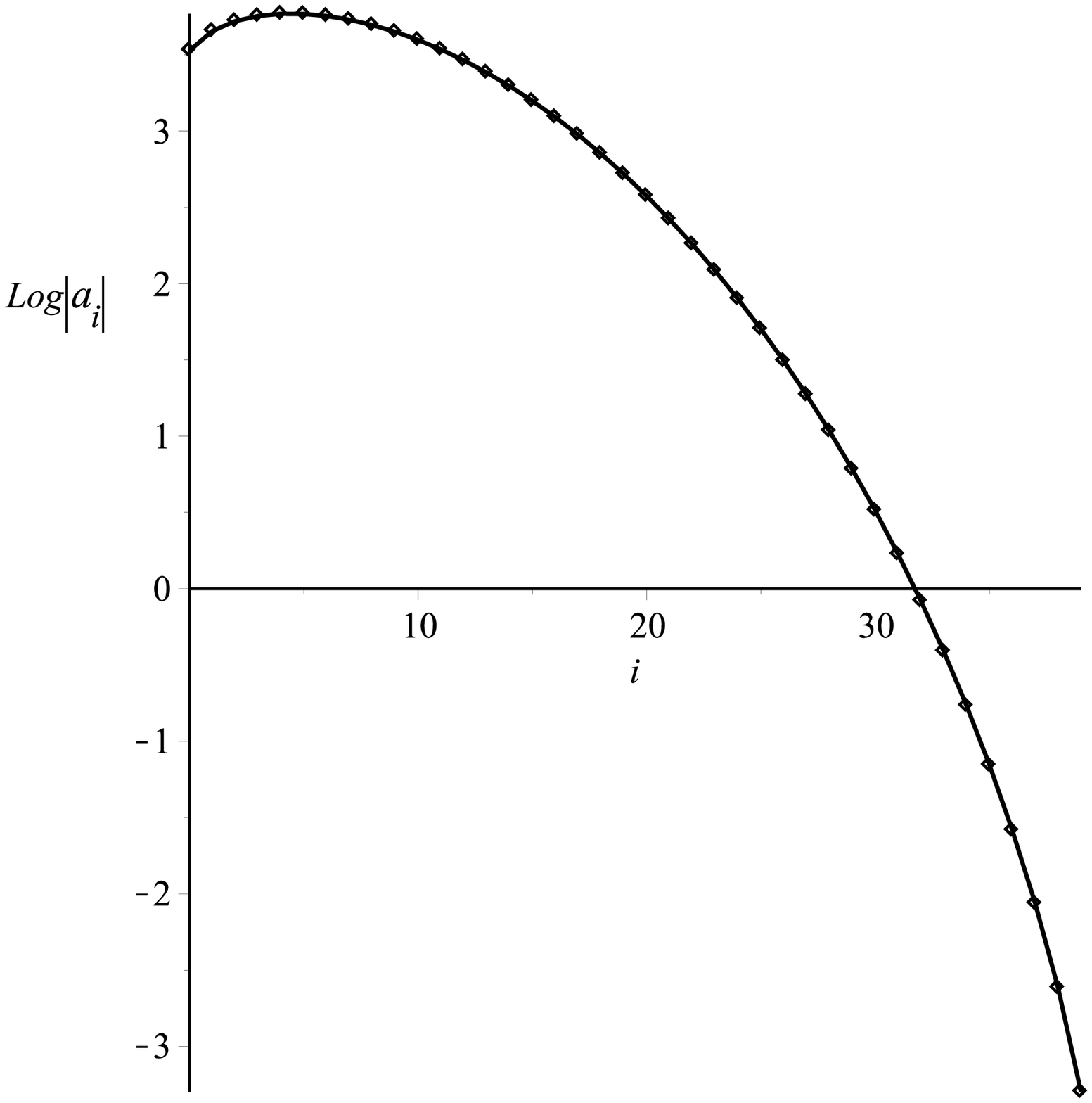}}
 \caption{Logarithmic graph of absolute coefficients $|ai|$ of RCS function of Example 7}

    \label{fig:exmp7:ai}
\end{figure}
\newpage
\section{Conclusions}
The main idea of this paper is to construct an approximation to the solution of nonlinear Lane-Emden equations in a semi-infinite interval. A set of
Rational Chebyshev of the Second kinds of functions have been proposed to provide an effective but simple way to improve the convergence of the solution by the collocation method. there is a comparison among the exact solution and the numerical solutions of Horedt \cite{-38} and the series solutions of Wazwaz
\cite{29},Liao\cite{33}, Singh et al. \cite{49} and Ramos\cite{37} and the current work. As a result, the present work provides an acceptable approach for Lane-Emden equations. The logarithmic figures of absolute coefficients result in the approach which has exponentially convergence rate. Finally, the most important concern of spectral methods is the choice of basis functions. there are some properties for basis functions such as rapid convergence, easy to compute and completeness, which means that any solution can be represented to arbitrarily high accuracy by taking the truncation N
are sufficiently large.

\clearpage


\begin{thebibliography}{99}
\bibitem{could 90}O. Coulaud, D. Funaro, O. Kavian, Laguerre spectral approximation of elliptic problems in exterior domains, Comput. Method. Appl. M 80 (1990) 451-458.
\bibitem{funaro 91}D. Funaro, O. Kavian, Approximation of some diffusion evolution equations in
unbounded domains by Hermite functions, Math. Comput. 57 (1991) 597-619.
\bibitem{funaro 90} D. Funaro, Computational aspects of pseudospectral Laguerre approximations,
Appl. Numer. Math. 6 (1990) 447-457.
\bibitem{gua 99} B.Y. Guo, Error estimation of Hermite spectral method for nonlinear partial differential equations, Math. Comput. 68 (227) (1999) 1067-1078.
\bibitem{gua 2000a} B.Y. Guo, J. Shen, Laguerre-Galerkin method for nonlinear partial differential
equations on a semi-infinite interval, Numer. Math. 86 (2000) 635-654.
\bibitem{mady 85} Y. Maday, B. Pernaud-Thomas, H. Vandeven, Reappraisal of Laguerre type spectral methods, La Rech. Aerospatiale 6 (1985) 13-35.
\bibitem{shen 2000} J. Shen, Stable and efficient spectral methods in unbounded domains using Laguerre functions, SIAM J. Numer. Anal. 38 (2000) 1113-1133.
\bibitem{siyyam 2001} H.I. Siyyam, Laguerre Tau methods for solving higher order ordinary differential
equations, J. Comput. Anal. Appl. 3 (2001) 173-182.
\bibitem{gua 98} B.Y. Guo, Gegenbauer approximation and its applications to differential equations on the whole line, J. Math. Anal. Appl. 226 (1998) 180-206.
\bibitem{gua 2000b} B.Y. Guo, Jacobi spectral approximation and its applications to differential equations on the half line, J. Comput. Math. 18 (2000) 95-112.
\bibitem{gua 2000c} B.Y. Guo, Jacobi approximations in certain Hilbert spaces and their applications
to singular differential equations, J. Math. Anal. Appl. 243 (2000) 373-408.
\bibitem{boyd 2000} J.P. Boyd, Chebyshev and Fourier Spectral Methods, second edition, Dover, New
York, 2000.
\bibitem{christov 1982} C.I. Christov, A complete orthogonal system of functions in $L^{2}(-\infty, +\infty)$ space,SIAM J. Appl. Math. 42 (1982) 1337-1344.
\bibitem{boyd 1987a} J.P. Boyd, Spectral methods using rational basis functions on an infinite interval,J. Comput. Phys. 69 (1987) 112-142.
\bibitem{boyd 1987b} J.P. Boyd, Orthogonal rational functions on a semi-infinite interval, J. Comput.Phys. 70 (1987) 63-88.
\bibitem{gushen 2000}B.Y. Guo, J. Shen, Z.Q. Wang, A rational approximation and its applications to
differential equations on the half line, J. Sci. Comput. 15 (2000) 117-147.

\bibitem{paraz 2004a}K. Parand, M. Razzaghi, Rational Chebyshev tau method for solving Volterra's population model, Appl. Math. Comput. 149 (2004) 893-900.
\bibitem{paraz 2004b}K. Parand, M. Razzaghi, Rational Chebyshev tau method for solving higher-order ordinary differential equations, Int. J. Comput. Math. 81 (2004) 73-80.
\bibitem{paraz 2004c}K. Parand, M. Razzaghi, Rational Legendre approximation for solving some physical problems on semi-infinite intervals, Phys. Scr. 69 (2004) 353-357.
\bibitem{parnd shahini}K. Parand, M. Shahini, Rational Chebyshev pseudospectral approach for solving Thomas-Fermi equation, Phys. Lett. A 373 (2009) 210-213.
\bibitem{paraz taghavi}K. Parand, A. Taghavi, Rational scaled generalized Laguerre function collocation method for solving the Blasius equation, J. Comput. Applied. Math. 233 (2009) 980-989
\bibitem{bender 1986}C.M. Bender, K.A. Milton, S.S. Pinsky, L.M. Simmons Jr., A new perturbative approach to nonlinear problems, J. Math. Phys. 30 (1989) 1447-1455.
\bibitem{mandel 2001}V.B. Mandelzweig, F. Tabakin, Quasilinearization approach to nonlinear problems in physics with application to nonlinear ODEs, Comput. Phys. Commun. 141 (2001) 268-281.
\bibitem{shawag 1993}N.T. Shawagfeh, Nonperturbative approximate solution for Lane-Emden equation, J. Math. Phys. 34 (1993) 4364-4369.
\bibitem{ramos 2005}J.I. Ramos, Linearization techniques for singular initial-value problems of ordinary differential equations, Appl. Math. Comput. 161 (2005) 525-542.
\bibitem{yousefi 2006}S.A. Yousefi, Legendre wavelets method for solving differential equations of
Lane-Emden type, Appl. Math. Comput. 181 (2006) 1417-1422.
\bibitem{davis 2006}Davis, H.T., Introduction to Nonlinear Differential and Integral
Equations. Dover publication, Inc., New York, 1962.
\bibitem{chandr 1967}S. Chandrasekhar, Introduction to the Study of Stellar Structure, Dover, New
York, 1967.
\bibitem{dehghan 2008}M. Dehghan, F. Shakeri, Approximate solution of a differential equation arising
in astrophysics using the variational iteration method, New Astron. 13 (2008)53-59.
\bibitem{agarwal 2007} R.P. Agarwal, D. O'Regan, Second order initial value problems of Lane-Emden
type, Appl. Math. Lett. 20 (2007) 1198-1205.
\bibitem{bender89} C.M. Bender, K.A. Milton, S.S. Pinsky, L.M. Simmons Jr., A new perturbative approach to nonlinear problems, J. Math. Phys. 30 (1989) 1447-1455.
\bibitem{kara92} A.H. Kara, F.M. Mahomed, Equivalent Lagrangians and solutions of some classes of nonlinear equations, Int. J. Nonlinear Mech. 27 (1992) 919-9
\bibitem{kara93} A.H. Kara, F.M. Mahomed, A note on the solutions of the Emden-Fowler equation, Int. J. Nonlinear Mech. 28 (1993) 379-384.
\bibitem{29} A. Wazwaz, A new algorithm for solving differential equations of Lane-Emden type, Appl. Math. Comput. 118 (2001) 287-310.
\bibitem{33} S. Liao, A new analytic algorithm of Lane-Emden type equations, Appl. Math. Comput. 142 (2003) 1-16.
\bibitem{34} J.H. He, Variational approach to the Lane-Emden equation, Appl. Math. Comput. 143 (2003) 539-541.
\bibitem{35} J.I. Ramos, Linearization methods in classical and quantum mechanics, Comput.Phys. Commun. 153 (2003) 199-208.
\bibitem{20} K. Parand, M. Razzaghi, Rational Legendre approximation for solving some physical problems on semi-infinite intervals, Phys. Scripta 69 (2004) 353-357
\bibitem{36} J.I. Ramos, Linearization techniques for singular initial-value problems of ordinary differential equations, Appl. Math. Comput. 161 (2005) 525-542.
\bibitem{32} A. Wazwaz, The modified decomposition method for analytic treatment of differential equations, Appl. Math. Comput. 173 (2006) 165-176.
\bibitem{37} J.I. Ramos, Series approach to the Lane-Emden equation and comparison with the homotopy perturbation method, Chaos Solitons Fractals 38 (2008) 400-408.
\bibitem{46} A. Aslanov, Determination of convergence intervals of the series solutions of Emden-Fowler equations using polytropes and isothermal spheres, Phys. Lett. A 372 (2008) 3555-3561.
\bibitem{48} H.R. Marzban, H.R. Tabrizidooz, M. Razzaghi, Hybrid functions for nonlinear initial-value problems with applications to Lane-Emden type equations, Phys. Lett. A 372 (37) (2008) 5883-5886.
\bibitem{55} M. Dehghan, F. Shakeri, Approximate solution of a differential equation arising in astrophysics using the variational iteration method, New Astron. 13 (2008)
\bibitem{42} M.S.H. Chowdhury, I. Hashim, Solutions of Emden-Fowler equations by homotopy perturbation method, Nonlinear Anal. Real World Appl. 10 (2009) 104-115.
\bibitem{23} K. Parand, M. Shahini, M. Dehghan, Rational Legendre pseudospectral approach for solving nonlinear differential equations of Lane-Emden type, J. Comput.Phys. 228 		(2009) 8830-8840.
\bibitem{38} J.I. Ramos, Piecewise-adaptive decomposition methods, Chaos Solitons Fractals 40 (2009) 1623-1636.
\bibitem{40} A. S Bataineh, M.S.M. Noorani, I. Hashim, Homotopy analysis method for singular IVPs of Emden-Fowler type, Commun. Nonlinear Sci. Numer. Simul. 14 (2009) 1121-		1131.
\bibitem{47} A. Yildirim, T. Özi ¸s, Solutions of singular IVPs of Lane-Emden type by the variational iteration method, Nonlinear Anal. Ser. A Theor. Methods Appl. 70 		(2009)2480-2484.
\bibitem{49} O.P. Singh, R.K. Pandey, V.K. Singh, An analytic algorithm of Lane-Emden type equations arising in astrophysics using modified homotopy analysis 		 method,Comput.Phys. Commun. 180 (2009) 1116-1124.

\bibitem{-1} K. Parand, A. Taghavi, A. Shahini, Comparison between rational Chebyshev and modified generalized Laguerre functions pseudospectral methods for solving Lane-Emden and unsteady gas equations, Acta Physica Polonica B. 40 (2009) 1749-1763.
\bibitem{-2} O.P. Singh, R.K. Pandey, V.K. Singh, An analytic algorithm of Lane-Emden type equations arising in astrophysics using modified Homotopy analysis method
Comput. Phys. Commu. 180 (2009) 1116-1124.
\bibitem{-3} K. Parand, M. Shahini, M. Dehghan, Rational Legendre pseudospectral approach for solving nonlinear differential equations of Lane-Emden type , Journal of Comput. Phys. 228 (2009) 8830-8840.
\bibitem{-4} F. Geng, M. Cui, B. Zhang, Method for solving nonlinear initial value problems by combining homotopy perturbation and reproducing kernel Hilbert space methods
 Nonlinear Analysis: Real World App.  11 (2010) 637-644.
\bibitem{-5} S. Karimi Vanani,A. Aminataei, On the numerical solution of differential equations of Lane-Emden type, Comput. and Math. with App. 59 (2010) 2815-2820.
\bibitem{-6}  van Gorder, R.A.
Analytical solutions to a quasilinear differential equation related to the Lane-Emden equation of the second kind
(2011) Celestial Mechanics and Dynamical Astronomy, 109 (2), pp. 137-145.
\bibitem{-7}  Boyd, J.P.
Chebyshev spectral methods and the Lane-Emden problem
(2011) Numerical Mathematics, 4 (2), pp. 142-157.
\bibitem{-8}  Wazwaz, A.-M., Rach, R.
Comparison of the Adomian decomposition method and the variational iteration method for solving the Lane-Emden equations of the first and second kinds
(2011) Kybernetes, 40 (9), pp. 1305-1318.
\bibitem{-9}  Yüzbasi, S.
A numerical approach for solving a class of the nonlinear Lane-Emden type equations arising in astrophysics
(2011) Mathematical Methods in the Applied Sciences, 34 (18), pp. 2218-2230.
\bibitem{-10}  Bhrawy, A.H., Rezaei, A., Boubaker, K.
Jacobi Pseudo-Spectral Method JPSM and BPES for Solving Differential Equations
(2012) Differential Equations and Dynamical Systems, 20 (1), pp. 67-76.
\bibitem{-11}  Bhrawy, A.H., Alofi, A.S.
A Jacobi-Gauss collocation method for solving nonlinear Lane-Emden type equations
(2012) Communications in Nonlinear Science and Numerical Simulation, 17 (1), pp. 62-70.
\bibitem{-12}  Caglar, S.H., Ucar, M.F.
Non-polynomial spline method for a time-dependent heat-like Lane-Emden equation
(2012) Acta Physica Polonica A, 121 (1), pp. 262-264.
\bibitem{-13} Shen, Q.
A meshless scaling iterative algorithm based on compactly supported radial basis functions for the numerical solution of Lane-Emden-Fowler equation
(2012) Numerical Methods for Partial Differential Equations, 28 (2), pp. 554-572.
\bibitem{-14} Pandey, R.K., Kumar, N., Bhardwaj, A., Dutta, G.
Solution of Lane-Emden type equations using Legendre operational matrix of differentiation
(2012) Applied Mathematics and Computation, 218 (14), pp. 7629-7637.
\bibitem{-15} Pandey, R.K., Kumar, N.
Solution of Lane-Emden type equations using Bernstein operational matrix of differentiation
(2012) New Astronomy, 17 (3), pp. 303-308.
\bibitem{-16} Herbst, R.S., Momoniat, E.
On computing the structure of critically rotating white dwarfs
(2012) New Astronomy, 17 (4), pp. 388-391.
\bibitem{-17}  Biezuner, R.J., Brown, J., Ercole, G., Martins, E.M.
Computing the first eigenpair of the p-Laplacian via inverse iteration of sublinear supersolutions
(2012) Journal of Scientific Computing, 52 (1), pp. 180-201.
\bibitem{-18}  Boubaker, K., Van Gorder, R.A.
Application of the BPES to Lane-Emden equations governing polytropic and isothermal gas spheres
(2012) New Astronomy, 17 (6), pp. 565-569.
\bibitem{-19}  Rismani, A.M., Monfared, H.
Numerical solution of singular IVPs of Lane-Emden type using a modified Legendre-spectral method
(2012) Applied Mathematical Modelling, 36 (10), pp. 4830-4836.
\bibitem{-20}  Jalab, H.A., Ibrahim, R.W., Murad, S.A., Melhum, A.I., Hadid, S.B.
Numerical solution of Lane-Emden equation using neural network
(2012) AIP Conference Proceedings, 1482, pp. 414-418.
\bibitem{-21}  Wazwaz, A.-M., Rach, R., Duan, J.-S.
Adomian decomposition method for solving the Volterra integral form of the Lane-Emden equations with initial values and boundary conditions
(2013) Applied Mathematics and Computation, 219 (10), pp. 5004-5019.
\bibitem{-22}  Karimi Vanani, S., Soleymani, F.
An extension of the Tau numerical algorithm for the solution of linear and nonlinear Lane-Emden equations
(2013) Mathematical Methods in the Applied Sciences, 36 (6), pp. 674-682.
\bibitem{-23}  Parand, K., Nikarya, M., Rad, J.A.
Solving non-linear Lane-Emden type equations using Bessel orthogonal functions collocation method
(2013) Celestial Mechanics and Dynamical Astronomy, 116 (1), pp. 97-107.
\bibitem{-24}  Huré, J.-M.
Exact, singularity-free recasting of the Newtonian potential in continuous media
(2013) Astronomy and Astrophysics, 554, art. no. A45, .
\bibitem{-25}  Kaur, H., Mittal, R.C., Mishra, V.
Haar wavelet approximate solutions for the generalized Lane-Emden equations arising in astrophysics
(2013) Computer Physics Communications, 184 (9), pp. 2169-2177.
\bibitem{-26}  \"{O}zt\"{u}rk, Y., G\"{u}lsu, M.
An operational matrix method for solving lane-emden equations arising in astrophysics
(2013) Mathematical Methods in the Applied Sciences, . Article in Press.
\bibitem{-27}  Razzaghi, M.
Hybrid functions for nonlinear differential equations with applications to physical problems
(2013) Lecture Notes in Computer Science (including subseries Lecture Notes in Artificial Intelligence and Lecture Notes in Bioinformatics), 8236 LNCS, pp. 86-94.
\bibitem{-27}  Razzaghi, M.
Hybrid functions for nonlinear differential equations with applications to physical problems
(2013) Lecture Notes in Computer Science (including subseries Lecture Notes in Artificial Intelligence and Lecture Notes in
Bioinformatics), 8236 LNCS, pp. 86-94.

\bibitem{-28} Li, Z.-X., Wang, Z.-Q.
Pseudospectral methods for computing the multiple solutions of the Lane-Emden equation
(2013) 255, pp. 407-421.

\bibitem{-29}  Farinelli, R., De Laurentis, M., Capozziello, S., Odintsov, S.D.
Numerical solutions of the modified Lane-Emden Equation in f(R)-gravity
(2014) Monthly Notices of the Royal Astronomical Society, 440 (3), pp. 2894-2900.
\bibitem{-30}  Wazwaz, A.-M.
The variational iteration method for solving the Volterra integro-differential forms of the Lane-Emden equations of the first and the second kind
(2014) Journal of Mathematical Chemistry, 52 (2), pp. 613-626.
\bibitem{-31} Rach, R., Duan, J.-S., Wazwaz, A.-M.
Solving coupled Lane-Emden boundary value problems in catalytic diffusion reactions by the Adomian decomposition method
(2014) Journal of Mathematical Chemistry, 52 (1), pp. 255-267.
\bibitem{-32}  Mohammadzadeh, R., Lakestani, M., Dehghan, M.
Collocation method for the numerical solutions of Lane-Emden type equations using cubic Hermite spline functions
(2014) Mathematical Methods in the Applied Sciences, 37 (9), pp. 1303-1317.
\bibitem{-33} Wazwaz, A.-M., Rach, R., Duan, J.-S.
A study on the systems of the Volterra integral forms of the Lane-Emden equations by the Adomian decomposition method
(2014) Mathematical Methods in the Applied Sciences, 37 (1), pp. 10-19.
\bibitem{-34}  \"{O}zt\"{u}rk, Y., G\"{u}lsu, M.
An approximation algorithm for the solution of the Lane-Emden type equations arising in astrophysics and engineering using Hermite polynomials
(2014) Computational and Applied Mathematics, 33 (1), pp. 131-145.
\bibitem{-35} Rach, R., Wazwaz, A.-M., Duan, J.-S.
The Volterra integral form of the Lane-Emden equation: new derivations and solution by the Adomian decomposition method
(2014) Journal of Applied Mathematics and Computing, . Article in Press.
\bibitem{-36} G\"{u}rb\"{u}z, B., Sezer, M.
Laguerre polynomial approach for solving Lane-Emden type functional differential equations
(2014) Applied Mathematics and Computation, 242, pp. 255-264.

\bibitem{-37} Mason J.C., Handscomb D.C. Chebyshev polynomials (2003) Chapman and Hall/CRC A CRC Press Company Boca Raton London New York Washington, D.C.

\bibitem{-38}G.P. Horedt, Polytropes: Applications in Astrophysics and Related Fields, Kluwer Academic Publishers, Dordrecht, 2004.
\bibitem{-39} A. Aslanov, A generalization of the Lane-Emden equation, Int. J. Comput.Math. 85 (2008) 1709-1725.
\bibitem{-40} A.S. Bataineh, M.S.M. Noorani, I. Hashim, Homotopy analysis method for singular IVPs of Emden-Fowler type, Commun. Nonlinear Sci. Numer. Simul. 14 (2009) 1121-1131.



\end{thebibliography}
\end{document}